\pgfplotsset{compat=1.18}
\setlist[enumerate]{label=\textit{\arabic*}.}
\theoremstyle{plain}
\newtheorem{theorem}{Theorem}
\newtheorem{lemma}[theorem]{Lemma}
\newtheorem{proposition}[theorem]{Proposition}
\newtheorem{corollary}[theorem]{Corollary}
\theoremstyle{definition}
\newtheorem*{remark}{Remark}
\numberwithin{equation}{section}
\renewcommand{\d}{\text{\rm d}}
\renewcommand{\Im}{\operatorname{Im}}
\renewcommand{\Re}{\operatorname{Re}}
\newcommand{\Fq}{\mathbb{F}_q}
\title{Maximizers of the $L^2\to L^4$ Fourier extension inequality for cones in finite fields}
\author[C. González-Riquelme]{Cristian González-Riquelme}
\address{Centre de Recerca Matemàtica, Campus de Bellaterra, Edifici C 
08193 Bellaterra (Barcelona), Spain.}
\email{cgonzalez@crm.cat}
\author[T. Ismoilov]{Tolibjon Ismoilov}
\address{SISSA - Scuola Internazionale Superiore di Studi Avanzati\\ Via Bonomea 265, 34136 Trieste, Italy}
\email{tolibjon.ismoilov@sissa.it}
\subjclass[2020]{42B10, 12E20, 05B25, 26D15}
\keywords{Sharp restriction, finite fields, maximizers, cone, convolution, finite Fourier transform}
\begin{document}

\begin{abstract}
Sharp Fourier restriction theory and finite field extension theory have both been topics of interest in the last decades. Very recently, in \cite{GonzalezOliveira}, the research into the intersection of these two topics started. There it was established that, for the $(3,1)$-cone $\Gamma_{(3,1)}^3:=\{\boldsymbol{\eta}\in \Fq^4\setminus\{\boldsymbol{0}\} : \eta_1^2+\eta_2^2+\eta_3^2=\eta_4^2\},$ the Fourier extension map from $L^2\to L^{4}$ is maximized by constant functions when $q=3\, \pmod{4}$. 
In this manuscript, we advance this line of inquiry by establishing sharp inequalities for the $L^{2}\to L^{4}$ extension inequalities applicable for all remaining cones $\Gamma^3\subset \mathbb{F}_q^4$. These cones include the $(2,2)$-cone $\Gamma_{(2,2)}^3:=\{\boldsymbol{\eta}\in \Fq^4\setminus\{\boldsymbol{0}\} : \eta_1^2+\eta_2^2=\eta_3^2+\eta_4^2\}$
% \{(\boldsymbol{\eta}_1,\boldsymbol{\eta}_2)\in \mathbb{F}_q^2\times \mathbb{F}_q^2\setminus \{0\};\,\boldsymbol{\eta}^2_1=\boldsymbol{\eta}^2_2\}$
for general $q=p^n$ and the $(3,1)$-cone
% \{(\boldsymbol{\eta},\tau, \sigma)\in \mathbb{F}_q^2\times \mathbb{F}_q^2\setminus \{0\};\, \boldsymbol{\eta}^2=\tau\sigma\}$ 
when $q=1\, \pmod{4}$. Moreover, we classify all the extremizers in each case. We note that the analogous problem for the $(2, 2)$-cone in the
euclidean setting remains open.
\end{abstract}
\allowdisplaybreaks
\maketitle

\section{Introduction}
\subsection{Background} 
Fourier Restriction theory has been a major topic of research since the work of Stein (see \cite{stein1993harmonic}), where the connection between curvature and decay of the Fourier transform was first introduced. Later, in the work of Strichartz \cite{Strichartz1977}, Fourier restriction estimates for quadratic surfaces, which also include cones, were studied. Let $d\ge 3$, given a pair of positive integers $a,b$ with $a+b=d$, we define the $2$-sheeted $(a,b)$-cone as \begin{align*}
\Gamma^{d-1}_{(a,b)}:=\left\{(x_1, \dots, x_d)\in \mathbb{R}^{d}\, :\;  \sum_{i=1}^{a}x_i^2=\sum_{j=a+1}^{d}x_j^2\right\}.
\end{align*} 
We call the $1$-sheeted cone the subset of this cone which consists of all points with the last coordinate positive.  
Defining the measure $$\d \mu(x_1,\dots,x_d)=\frac{\d x_1\dots \d x_{d-1}}{|x_d|},$$
over $\Gamma^{d-1}_{(a,b)}$, Strichartz proved that there exists a constant $C_{\Gamma^{d-1}_{(a,b)}}$ such that for any function $f$ defined on the $(a,b)$-cone and  
$$\mathcal{E}f(x)\coloneqq\int_{\Gamma^{d-1}_{(a,b)}}f(\omega)e^{-ix\cdot \omega}\d \mu(\omega),$$
satisfies \begin{align}\label{strichartzinrd}
\|\mathcal{E}f\|_{2+\frac{4}{d}}\le C_{\Gamma^{d-1}_{(a,b)}}\|f\|_2.    
\end{align}
These inequalities are connected to the homogeneous wave equation (see \cite{foschioliveira}). 
% Finding the best constants in such inequalities is 
Sharp restriction theory has been developed since the work of Foschi \cite{foschicone}, where the optimal constant for the inequality \eqref{strichartzinrd} is achieved for the $1$-sheeted $(2,1)$-cone and the $2$-sheeted $(3,1)$-cone (see also \cite{carneiroimrn}). In either case, it is crucial that the Lebesgue exponent $2+\frac{4}{d}$ is an even exponent, since in such instances Plancherel's theorem allows us to translate the problem into a problem of convolutions. In such a context, considerable effort has been made to understand: related sharp inequalities, existence and stability of extremizers (see \cite{ChristShaoExistenceOfExtremizers, FoschiSphereRestrictionMaximizers, goncalvesgiuseppe, giuseppe, thielenegrooliveira} and the references therein).

\subsection{Fourier Restriction in finite fields}
 Mockenhaupt and Tao \cite{MockenhauptTao} initiated the study of restriction phenomenon in the space $\mathbb{F}_{q}^d$, where $\mathbb{F}_{q}$ is a finite field with characteristic $\operatorname{char}(\mathbb{F})>2$. Given $1\le r,s\le \infty$ and $\mathcal{S}\subset \mathbb{F}_q^d$, let us define $\textbf{R}^{*}_{\mathcal{S}}(r\to s)$ as the smallest constant such that the inequality 
 \begin{align}\label{extensioninequalityintro}
\|(f\sigma)^{\vee}\|_{L^{s}(\mathbb{F}^d_q,\d \boldsymbol{x})}\le \textbf{R}^{*}_{\mathcal{S}}(r\to s)\|f\|_{L^{r}(\mathcal{S},\d \sigma)}     
 \end{align}
holds for every $f: \mathcal{S}\to \mathbb{C}$; here $\d \boldsymbol{x}$ is the usual counting measure in $\mathbb{F}^{d}_q$, and $\d \sigma$ is the normalized counting measure in $\mathcal{S}$ and 
\begin{equation*}
    (f\sigma)^{\vee}(\boldsymbol{x})=\frac{1}{|\mathcal{S}|}\underset{\boldsymbol{\xi}\in \mathcal{S}}{\sum}f(\boldsymbol{\xi})e(\boldsymbol{\xi}\cdot\boldsymbol{x}),
\end{equation*}
where $e:\mathbb{F}_q\to \mathbb{S}^{1}$ is a non-principal character. A non-principal character is a non-constant map $e:\mathbb{F}_q\to \mathbb{S}^{1}$ such that $e(x+y)=e(x)e(y),$ these can be listed as:
$e_{a}(\cdot)=\exp \left(\frac{2\pi i}{p}\operatorname{Tr}_n(a\, \cdot)\right),$ where $a\in \mathbb{F}_q$ and $\operatorname{Tr}_n:\mathbb{F}_q\to \mathbb{F}_p$ is the $\mathbb{F}_p$-linear map given by
\begin{align*}
\operatorname{Tr}_n(x)=\sum_{k=0}^{n-1}x^{p^k}.  
\end{align*}

We say that we have \textit{restriction property} in $\mathcal{S}$ with exponents $r,s$ if the constant $\textbf{R}^{*}_{\mathcal{S}}(r\to s)$ is bounded independently of $q$. The restriction problem over finite fields asks, for a given $\mathcal{S}$, for which $r,s$ such property holds. In \cite{MockenhauptTao} the authors established the \textit{restriction property}  $2\to 4$ for the paraboloid in low dimensions ($\mathcal{P}^1$ and $\mathcal{P}^2$). Let us define in $\mathbb{F}^{d}_q$ the cone with signature $(a,b)$ (with $a,b\in \mathbb{Z}_{>0}$, $a+b=d$) as:
\begin{align*}
\Gamma^{d-1}_{(a,b)}:=\left\{(\eta_1,\dots,\eta_d)\in \mathbb{F}_{q}^{d}\, : \; \sum_{i=1}^{a}\eta_i^2=\sum_{j=a+1}^{d}\eta_j^2\right\}.
\end{align*}
For the $(2,1)$ and $(3,1)$ cones, the $2\to 4$ \textit{restriction property} was studied in \cite{MockenhauptTao} and \cite{kohleepham}, respectively. 

Very recently, the study of sharp constants for Fourier extension inequalities was inaugurated in the work of the first author along with Oliveira e Silva \cite{GonzalezOliveira}. Here, the authors compute the optimal constant $\textbf{R}^{*}_{\mathcal{S}}(2\to 4)$ for $\mathcal{S}=\mathcal{P}^{2}$, for general $q$, and $\mathcal{S}=\Gamma^{3}_{(3,1)}\setminus \{\boldsymbol{0}\}$ when $q=3\, \pmod{4}$. Moreover, they compute the optimal constant $\textbf{R}^{*}_{\mathcal{P}^1}(2\to 6)$. In all listed cases, constants are maximizers of the inequality. Furthermore, in all listed cases, the maximizers are required to be constant in absolute value. The complex phase of the maximizers is a more intricate issue that was settled just for $\mathcal{P}^2$ when $q=1\pmod{4}$. It is important to notice that the removal of the origin in the cone is of major relevance (that is not the case in \cite{MockenhauptTao}), since constants are not maximizers if it is not removed (see \cite[Theorem 1.6]{GonzalezOliveira}). This highlights the delicate nature of these optimal inequalities: even small perturbations on the objects 
of study could change the whole behavior of the problem. Notably, the case $q=1 \pmod{4}$ for $\textbf{R}^{*}_{\Gamma^{3}_{(3,1)}\setminus \{\boldsymbol{0}\}}(2\to 4)$ was beyond the reach of the methods there presented. 

Another direction of research in sharp extension inequalities of finite fields has been developed in the work \cite{BiswasCarneiroFlockOliveiraeSilvaStovallTautges}, where optimal inequalities for the Fourier extension inequalities for the moment curve in finite fields are achieved. 

\subsection{Main results}

In the present manuscript, our main results are the following.
\begin{theorem}[Optimal constant]\label{thm:sharprestriction}
We have that  
\begin{align*}
\textbf{\emph{R}}^\ast_{\Gamma^{3}_{(2,2)}\setminus \{\boldsymbol{0}\}}(2\to 4)=q\left(\frac{q^5+4q^4-4q^3-6q^2+3q+3}{(q+1)^6(q-1)^3}\right)^{\frac{1}{4}}.\end{align*}
\end{theorem}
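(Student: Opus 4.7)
My plan is to apply Plancherel to reduce the $L^4$ extension estimate to a convolution/additive-energy computation on $\mathcal{S}\coloneqq\Gamma^3_{(2,2)}\setminus\{\boldsymbol{0}\}$, and then exploit the orthogonal symmetry of the split form to pin down the sharp constant. Setting
\[
F(\boldsymbol{\eta})\coloneqq\sum_{\substack{\boldsymbol{\xi}_1+\boldsymbol{\xi}_2=\boldsymbol{\eta}\\\boldsymbol{\xi}_i\in\mathcal{S}}}f(\boldsymbol{\xi}_1)f(\boldsymbol{\xi}_2),\qquad r(\boldsymbol{\eta})\coloneqq\#\{(\boldsymbol{\xi}_1,\boldsymbol{\xi}_2)\in\mathcal{S}^2:\boldsymbol{\xi}_1+\boldsymbol{\xi}_2=\boldsymbol{\eta}\},
\]
Plancherel's theorem in $\Fq^4$ yields $\|(f\sigma)^\vee\|_{L^4(\Fq^4)}^4=(q^4/|\mathcal{S}|^4)\sum_{\boldsymbol{\eta}}|F(\boldsymbol{\eta})|^2$, and a pointwise Cauchy--Schwarz gives
\[
|F(\boldsymbol{\eta})|^2\le r(\boldsymbol{\eta})\sum_{\boldsymbol{\xi}_1+\boldsymbol{\xi}_2=\boldsymbol{\eta}}|f(\boldsymbol{\xi}_1)|^2|f(\boldsymbol{\xi}_2)|^2,
\]
with equality iff $f(\boldsymbol{\xi}_1)f(\boldsymbol{\xi}_2)$ depends only on the sum $\boldsymbol{\xi}_1+\boldsymbol{\xi}_2$, i.e., iff $f(\boldsymbol{\xi})=c\cdot e(\boldsymbol{a}\cdot\boldsymbol{\xi})$ on $\mathcal{S}$ for some $c\in\mathbb{C}$ and $\boldsymbol{a}\in\Fq^4$. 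For such $f$ normalized to $\|f\|_{L^2(d\sigma)}=1$, summation gives $\sum_{\boldsymbol{\eta}}|F(\boldsymbol{\eta})|^2=E(\mathcal{S})\coloneqq\sum_{\boldsymbol{\eta}}r(\boldsymbol{\eta})^2$, the additive energy of $\mathcal{S}$; this identifies the candidate sharp value $q^4E(\mathcal{S})/|\mathcal{S}|^4$ and the candidate extremizers.

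Proving that characters actually saturate the inequality for arbitrary $f$ is the main analytic step. The driving observation is that, by Witt's extension theorem, the orthogonal group $O(Q)$ of the split form $Q(\boldsymbol{x})=x_1^2+x_2^2-x_3^2-x_4^2$ acts transitively on $\mathcal{S}$; hence $R(\boldsymbol{\xi})\coloneqq\sum_{\boldsymbol{\xi}'\in\mathcal{S}}r(\boldsymbol{\xi}+\boldsymbol{\xi}')$ is constant on $\mathcal{S}$, with value $E(\mathcal{S})/|\mathcal{S}|$. I would then stratify the $\boldsymbol{\eta}$-sum into the singular contribution at $\boldsymbol{\eta}=\boldsymbol{0}$ (where $r(\boldsymbol{0})=|\mathcal{S}|$ and $|F(\boldsymbol{0})|^2$ is bounded separately by Cauchy--Schwarz against $|\mathcal{S}|$) and the bulk $\boldsymbol{\eta}\ne\boldsymbol{0}$ (where a uniform estimate $r(\boldsymbol{\eta})\le M$ arises from counting cone points in an affine 3-dimensional slice of $\Fq^4$, via $Q(\boldsymbol{\eta}-\boldsymbol{\xi})=0\Leftrightarrow B(\boldsymbol{\eta},\boldsymbol{\xi})=Q(\boldsymbol{\eta})/2$), and combine these bounds with the constancy of $R$ to derive the sharp global estimate.

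The final step is explicit evaluation. A direct count yields $|\mathcal{S}|=(q-1)(q+1)^2$, independently of $q\bmod 4$. For $E(\mathcal{S})$, the identity $r=\mathbf{1}_\mathcal{S}\ast\mathbf{1}_\mathcal{S}$ together with Parseval gives $E(\mathcal{S})=q^{-4}\|\widehat{\mathbf{1}_\mathcal{S}}\|_4^4$; using $\mathbf{1}_\Gamma(\boldsymbol{\xi})=q^{-1}\sum_{t\in\Fq}e(tQ(\boldsymbol{\xi}))$ and $\mathbf{1}_\mathcal{S}=\mathbf{1}_\Gamma-\delta_{\boldsymbol{0}}$ reduces $\widehat{\mathbf{1}_\mathcal{S}}$ to a combination of one-variable Gauss sums (since $Q$ splits across coordinates) with clean closed-form evaluations. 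Summing the fourth powers and collecting terms should produce
\[
E(\mathcal{S})=(q-1)(q+1)^2\,(q^5+4q^4-4q^3-6q^2+3q+3),
\]
so that $q^4E(\mathcal{S})/|\mathcal{S}|^4$ matches the claimed constant. The hardest step I expect to be the global upper bound: while the pointwise Cauchy--Schwarz and the $O(Q)$-symmetry strongly suggest that characters are the extremizers, rigorously ruling out improvement from non-character $f$ requires a careful spectral/stratified analysis of the convolution kernel $r|_{\mathcal{S}\times\mathcal{S}}$. The Gauss-sum bookkeeping for $E(\mathcal{S})$ is a secondary, elementary but tedious hurdle.
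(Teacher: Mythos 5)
Your setup (Plancherel reduction to the convolution form, the candidate value $q^4E(\mathcal S)/|\mathcal S|^4$, the count $|\mathcal S|=(q-1)(q+1)^2$, and the fact that characters give equality) agrees with the paper, and your energy formula $E(\mathcal S)=|\mathcal S|\,(q^5+4q^4-4q^3-6q^2+3q+3)$ is consistent with the claimed constant. But the entire content of the theorem is the sharp upper bound for arbitrary $f$, and there your plan has a genuine gap: applying the pointwise Cauchy--Schwarz $|F(\boldsymbol\eta)|^2\le r(\boldsymbol\eta)\sum_{\boldsymbol\xi_1+\boldsymbol\xi_2=\boldsymbol\eta}|f(\boldsymbol\xi_1)|^2|f(\boldsymbol\xi_2)|^2$ \emph{globally} is already too lossy to recover the sharp constant, no matter how cleverly you afterwards combine a uniform bound $r\le M$ on the bulk with the constancy of $R$ on $\mathcal S$. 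Indeed, after this step you must control the quadratic form $T(h)=\sum_{\boldsymbol\xi_1,\boldsymbol\xi_2\in\mathcal S}r(\boldsymbol\xi_1+\boldsymbol\xi_2)h(\boldsymbol\xi_1)h(\boldsymbol\xi_2)$ with $h=|f|^2\ge0$ by $C\,(\sum h)^2$, where $C\approx q^2$ is the sharp value; but taking $h$ to be the indicator of a single punctured line $\{\lambda\boldsymbol\xi:\lambda\in\mathbb F_q^\times\}\subset\Gamma^3$ gives $T(h)\approx (q-1)\,r(\boldsymbol 0)+(q-1)(q-2)(2q^2-q-2)\approx 3q^4$, while $C(\sum h)^2\approx q^4$. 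So $T(h)\le C(\sum h)^2$ is simply false, and constant row sums of the kernel (your $O(Q)$-invariance observation) do not bound the form over the nonnegative cone by its value on constants. The constancy of $R$ only identifies the value on the constant direction; it says nothing about concentration on the ``heavy'' fibers $\boldsymbol\eta\in\Gamma^3\cup\{\boldsymbol 0\}$, which is exactly where the difficulty lives.

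This is precisely why the paper applies Cauchy--Schwarz with the count $q^2+q$ \emph{only} for generic $\boldsymbol\eta\notin\Gamma^3\cup\{\boldsymbol 0\}$, and then handles the contribution of $\boldsymbol\eta\in\Gamma^3$ (and $\boldsymbol 0$) by a fine structural analysis of $\Sigma_{\boldsymbol\eta}$: each such fiber is a union of two punctured planes $H^\pm_{\boldsymbol\eta}$ sharing the punctured line $\mathcal L_{\boldsymbol\eta}$, and the proof proceeds via a mixed-product identity over the planes, AM--GM to decouple $H^+$ from $H^-$, and a plane-by-plane estimate $S(A)\le0$ using Cauchy--Schwarz within lines and planes. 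None of this (or any workable substitute, such as a genuine spectral analysis of the kernel $r(\boldsymbol\xi_1+\boldsymbol\xi_2)$ restricted to nonnegative inputs) appears in your sketch; the phrase ``careful spectral/stratified analysis'' is where the theorem actually is. The Gauss-sum evaluation of $E(\mathcal S)$ is a reasonable alternative to the paper's combinatorial count of $|\Sigma_{\boldsymbol\xi}|$, but it only pins down the candidate constant, not the inequality.
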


\begin{theorem}[Classification of the extremizers]\label{thm:classification}
    The function $f:\Gamma_{(2,2)}^3\setminus \{\boldsymbol{0}\}\to \mathbb{C}$ is a maximizer of \eqref{extensioninequalityintro} if and only if $f$ is given by 
    \begin{equation}\label{formulacharacterization}
        f(\eta_1, \eta_2, \eta_3, \eta_4)=\lambda\cdot \exp{\left(\frac{2 \pi i}{p}\operatorname{Tr}_n(a_1\eta_1+a_2\eta_2+a_3\eta_3+a_4\eta_4)\right)},
    \end{equation}
    where $\lambda\in \mathbb{C}^\times$ and $a_i\in \mathbb{F}_q$.
\end{theorem}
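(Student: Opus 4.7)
My plan is to analyze the equality cases of the inequalities underlying Theorem~\ref{thm:sharprestriction}. Via Plancherel and character orthogonality on $\mathbb{F}_q^4$, the fourth power of the extension norm admits the convolution-style rewriting
\begin{equation*}
\|(f\sigma)^\vee\|_{L^4(\mathbb{F}_q^4)}^4\;=\;\frac{q^4}{|\Gamma|^4}\sum_{\eta\in\mathbb{F}_q^4}|T_\eta(f)|^2,\qquad T_\eta(f):=\sum_{\substack{\xi,\xi'\in\Gamma\\\xi+\xi'=\eta}}f(\xi)f(\xi'),
\end{equation*}
where $\Gamma:=\Gamma_{(2,2)}^3\setminus\{\boldsymbol{0}\}$. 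The triangle inequality $|T_\eta(f)|\le T_\eta(|f|)$ gives $\|(f\sigma)^\vee\|_4\le\|(|f|\sigma)^\vee\|_4$, so combining with Theorem~\ref{thm:sharprestriction}, a maximizer $f$ must satisfy both: (a) $|T_\eta(f)|=T_\eta(|f|)$ for every $\eta$, and (b) $|f|$ is itself a non-negative maximizer. Since the non-negative extremizers are already identified as non-zero constants along the way to Theorem~\ref{thm:sharprestriction}, (b) forces $|f|\equiv c>0$.

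Writing $f=cg$ with $|g|\equiv 1$, condition (a) is the equality case of the complex triangle inequality inside each $T_\eta(f)$ and hence translates into the Cauchy-type relation
\begin{equation*}
g(\xi_1)g(\xi_2)=g(\xi_3)g(\xi_4)\quad\text{whenever}\quad\xi_1+\xi_2=\xi_3+\xi_4,\ \xi_i\in\Gamma.
\end{equation*}
Rewritten as $g(\xi_1)/g(\xi_3)=g(\xi_4)/g(\xi_2)$ whenever $\xi_1-\xi_3=\xi_4-\xi_2$, this says that $h(\xi-\xi'):=g(\xi)/g(\xi')$ is a well-defined map $h:\Gamma-\Gamma\to S^1$ with $h(0)=1$. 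A direct calculation on the $(2,2)$-cone verifies $\Gamma-\Gamma=\mathbb{F}_q^4$ (for any $t\ne\boldsymbol 0$, the condition $\xi,\xi-t\in\Gamma$ is a single non-trivial linear equation on $\Gamma$, which always has solutions), so $h$ is defined on the whole ambient group.

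The main obstacle is the multiplicativity $h(t_1+t_2)=h(t_1)h(t_2)$ for every $t_1,t_2\in\mathbb{F}_q^4$. Any $\xi\in\Gamma\cap(\Gamma+t_1)\cap(\Gamma+t_1+t_2)$ yields
\begin{equation*}
h(t_1+t_2)=\frac{g(\xi)}{g(\xi-t_1-t_2)}=\frac{g(\xi)}{g(\xi-t_1)}\cdot\frac{g(\xi-t_1)}{g(\xi-t_1-t_2)}=h(t_1)h(t_2),
\end{equation*}
so it suffices to show that this triple intersection is non-empty for every $(t_1,t_2)$. This is a character-sum / counting problem on the cone, which I would settle by an explicit computation of $|\Gamma\cap(\Gamma+s)|$ and of the triple intersections using Gauss sums, in the spirit of the calculations that go into the proof of Theorem~\ref{thm:sharprestriction}, treating any degenerate pairs by hand. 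Once $h$ is an additive homomorphism from $(\mathbb{F}_q^4,+)$ to $S^1$, character duality for finite abelian groups forces $h(t)=e(a\cdot t)$ for a unique $a\in\mathbb{F}_q^4$, and back-substituting gives $f(\xi)=\lambda e(a\cdot\xi)$ with $\lambda\in\mathbb{C}^\times$. The converse implication is immediate: for such $f$, $(f\sigma)^\vee(x)=\lambda(\mathbb{1}_\Gamma\sigma)^\vee(x+a)$, so by translation invariance $\|(f\sigma)^\vee\|_4=|\lambda|\,\|(\mathbb{1}_\Gamma\sigma)^\vee\|_4$, which matches the constant-function maximizer and hence saturates Theorem~\ref{thm:sharprestriction}.
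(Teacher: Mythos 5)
Your outline reproduces the first half of the argument correctly (equality in the termwise triangle inequality plus the sharp constant forces $\lvert f\rvert$ constant and the phase relation $g(\boldsymbol{\xi}_1)g(\boldsymbol{\xi}_2)=g(\boldsymbol{\xi}_3)g(\boldsymbol{\xi}_4)$ for cone points with equal sums — this is exactly \eqref{eq:functionalequation}; note only that ``$\lvert f\rvert$ is a non-negative maximizer'' by itself gives just $\lvert f(\boldsymbol{\xi})\rvert^2+\lvert f(-\boldsymbol{\xi})\rvert^2$ constant, and one needs the equality conditions of the symmetrization step, as in the paper's lemma, to upgrade this to $\lvert f\rvert$ constant). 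The genuine gap is in the second half. Your multiplicativity of $h$ rests on the claim that $\Gamma\cap(\Gamma+t_1)\cap(\Gamma+t_1+t_2)\neq\emptyset$ for \emph{every} pair $(t_1,t_2)$, which you defer to a Gauss-sum count ``treating degenerate pairs by hand''. This claim is false, and not just for isolated degeneracies: writing the cone as $\{Q=0\}$ with $Q(\boldsymbol{\eta})=\eta_1\eta_2-\eta_3\eta_4$ and letting $B$ be the associated bilinear form, if $\boldsymbol{\xi}$, $\boldsymbol{\xi}-s$, $\boldsymbol{\xi}-2s$ all lie on the cone then the quadratic polynomial
\begin{equation*}
t\;\longmapsto\;Q(\boldsymbol{\xi}-ts)=Q(\boldsymbol{\xi})-tB(\boldsymbol{\xi},s)+t^2Q(s)
\end{equation*}
has the three distinct roots $t=0,1,2$ (distinct since $p$ is odd), forcing $Q(s)=0$. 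Hence for every $s\notin\Gamma^3\cup\{\boldsymbol{0}\}$ the triple intersection $\Gamma\cap(\Gamma+s)\cap(\Gamma+2s)$ is \emph{empty}, so already the identity $h(2s)=h(s)^2$ — which you need for every generic direction — has no witnessing point, and your chaining argument breaks down exactly where the real difficulty of the problem sits. Without an additional mechanism to propagate multiplicativity across these missing configurations, the proof is incomplete.

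This is precisely the obstruction the paper's proof is built to circumvent: instead of working on all of $\Gamma-\Gamma$, it restricts the functional equation to the punctured planes $A^{\pm}_{\boldsymbol{\xi}}$ contained in the cone and proves Proposition \ref{prop:classificationonplanes}, namely that a solution of \eqref{eq:functionalequationonplane} on $\mathbb{F}_q^2\setminus\{\boldsymbol{0}\}$ is a scalar multiple of a character — a statement whose proof needs the $2p$-th root of unity and sign-uniformity steps exactly because the point $\boldsymbol{0}$ is missing and the naive doubling relation $\psi(2\boldsymbol{x})=\psi(\boldsymbol{x})^2$ is not directly available (the same phenomenon that defeats your triple intersections). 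It then patches the two transversal planes $A_1=\{(0,\eta_2,0,\eta_4)\}$ and $A_2=\{(\eta_1,0,\eta_3,0)\}$ via the functional equation to determine the phase globally. If you want to salvage your route, you would have to prove additivity of $h$ by chaining through decompositions $t_1+t_2=t_1'+t_2'$ for which the triple intersections are non-empty, which amounts to redoing an analysis of comparable depth to the paper's plane-by-plane argument; as written, the key step is asserted rather than proved, and in the stated generality it is not true.
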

We emphasize that these results are not available in the euclidean setup; in contrast, the $\Gamma^{3}_{(3,1)}$ cone (in $\mathbb{R}^4$) was established by Foschi \cite{foschicone} (as mentioned before). This evidences the big distinction that can exist between these two geometric objects. We aim, by our results in this discrete setting, to shed light on the problem in the euclidean setting.  

Moreover, our results imply the remaining case which has not been established before for the $(3,1)$-cone. Noticing that $\Gamma^{3}_{(2,2)}=\Gamma^{2}_{(3,1)}$ when $q=1 \pmod{4}$ (given that in this case there exists $\omega\in \mathbb{F}_q$ such that $\omega^2=-1$), we have the following. 
\begin{corollary}
We have that, when $q=1\pmod{4}$,  $$\textbf{\emph{R}}^{*}_{\Gamma^{3}_{(3,1)}\setminus \{\boldsymbol{0}\}}(2\to 4)=q\left(\frac{q^5+4q^4-4q^3-6q^2+3q+3}{(q+1)^6(q-1)^3}\right)^{\frac{1}{4}},$$
where this optimal constant is achieved by $f:\Gamma_{(2,2)}^{3}\setminus \{\boldsymbol{0}\}\to \mathbb{C}$ if and only if $f$ is given by 
    \begin{equation*}
        f(\eta_1, \eta_2, \eta_3, \eta_4)=\lambda\cdot \exp{\left(\frac{2 \pi i}{p}\operatorname{Tr}_n(a_1\eta_1+a_2\eta_2+a_3\eta_3+a_4\eta_4)\right)},
    \end{equation*}
    where $\lambda\in \mathbb{C}^\times$ and $a_i\in \mathbb{F}_q$.
\end{corollary}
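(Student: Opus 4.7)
The plan is to deduce the corollary directly from Theorem~\ref{thm:sharprestriction} and Theorem~\ref{thm:classification} via an explicit invertible linear change of variables $T\in GL_{4}(\Fq)$ that identifies $\Gamma_{(3,1)}^{3}\setminus\{\boldsymbol{0}\}$ with $\Gamma_{(2,2)}^{3}\setminus\{\boldsymbol{0}\}$ and that preserves the $L^{2}\to L^{4}$ extension problem. Since $q\equiv 1\pmod 4$, I would fix $\omega\in\Fq$ with $\omega^{2}=-1$ and define $T(\eta_{1},\eta_{2},\eta_{3},\eta_{4}):=(\eta_{1},\eta_{2},\omega\eta_{3},\eta_{4})$. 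The identity $\eta_{1}^{2}+\eta_{2}^{2}+\eta_{3}^{2}=\eta_{4}^{2}$ is equivalent to $\eta_{1}^{2}+\eta_{2}^{2}=(\omega\eta_{3})^{2}+\eta_{4}^{2}$, so $T$ restricts to a bijection of the two punctured cones.

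Next I would verify that this change of variables preserves the extension constant. Given $f:\Gamma_{(2,2)}^{3}\setminus\{\boldsymbol{0}\}\to\mathbb{C}$, set $g:=f\circ T$ and note that $T$ is self-transpose. A direct change of summation variable then gives
\[
(f\sigma)^{\vee}_{\Gamma_{(2,2)}^{3}\setminus\{\boldsymbol{0}\}}(\boldsymbol{x})=(g\sigma)^{\vee}_{\Gamma_{(3,1)}^{3}\setminus\{\boldsymbol{0}\}}(T\boldsymbol{x}),
\]
where the normalization constants $1/|\mathcal{S}|$ match because $|T(\mathcal{S})|=|\mathcal{S}|$. Using that $T$ is a bijection of $\Fq^{4}$, summing the fourth power over $\boldsymbol{x}$ is equivalent to summing over $T\boldsymbol{x}$, so $\|(f\sigma)^{\vee}\|_{L^{4}}=\|(g\sigma)^{\vee}\|_{L^{4}}$; likewise $\|f\|_{L^{2}(\d\sigma)}=\|g\|_{L^{2}(\d\sigma)}$. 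Hence the two optimal constants are equal (which, by Theorem~\ref{thm:sharprestriction}, yields the asserted value) and $f$ is a maximizer on the $(2,2)$-cone if and only if $g=f\circ T$ is a maximizer on the $(3,1)$-cone.

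Finally, I would invoke Theorem~\ref{thm:classification} to identify the maximizers $f$ on $\Gamma_{(2,2)}^{3}\setminus\{\boldsymbol{0}\}$ as the characters in \eqref{formulacharacterization} up to a nonzero multiplicative constant, and then pull them back by $T$. This simply replaces $a_{3}$ by $\omega a_{3}$; since $a_{3}\mapsto\omega a_{3}$ is a bijection of $\Fq$, I recover the stated parametrization with arbitrary $(a_{1},a_{2},a_{3},a_{4})\in\Fq^{4}$. I do not expect any real obstacle in this argument; the only delicate point is the careful bookkeeping of transposes and normalizations in the identity above, but these work out cleanly because $T$ is diagonal.
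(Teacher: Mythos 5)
Your argument is correct and is essentially the paper's own (the paper simply notes that for $q\equiv 1\pmod 4$ the existence of $\omega$ with $\omega^2=-1$ identifies the two cones, and deduces the corollary from Theorems \ref{thm:sharprestriction} and \ref{thm:classification}). Your explicit diagonal map $T$ and the verification that it preserves the extension norms and pulls characters back to characters just makes that identification precise; no gap.
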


These results complete the analysis of sharp constants for the analogues of the Strichartz's estimates in $\mathbb{F}_q^4$. Henceforth, we write $$\Gamma_{(2,2)}^{3}\setminus \{\boldsymbol{0}\}=:\Gamma^{3}.$$  
By using \cite[Proposition 2.1]{GonzalezOliveira}, the claims of Theorem \ref{thm:sharprestriction} and Theorem \ref{thm:classification} are equivalent to establishing that the smallest constant $\textbf{C}_{\Gamma^{3}}(2\to 4)$ such that, $\forall f:\Gamma^3\to \mathbb{C}$
\begin{equation}\label{eq:combinatorialestimate}
   \sum_{\boldsymbol{\xi}\in \mathbb{F}_q^4}\bigg\lvert \sum_{\substack{\boldsymbol{\eta}_1, \boldsymbol{\eta}_2\, \in \, \Gamma^3 \\ \boldsymbol{\eta}_1+\boldsymbol{\eta}_2=\boldsymbol{\xi}}} f(\boldsymbol{\eta}_1)f(\boldsymbol{\eta}_2) \bigg\rvert^2\leq \mathbf{C}_{\Gamma^3}(2\to4) \bigg(\sum_{\boldsymbol{\xi}\in \Gamma^3} \lvert f(\boldsymbol{\xi})\rvert^2\bigg)^2 
\end{equation}
holds, is \begin{align*}
    \mathbf{C}_{\Gamma^3}(2\to4)=\frac{q^5+4q^4-4q^3-6q^2+3q+3}{(q+1)^2(q-1)},
    \end{align*}
and that the extremizers for this inequality are characterized by \eqref{formulacharacterization}.

In \cite{GonzalezOliveira} the method used to prove the analogous of \eqref{eq:combinatorialestimate} for the $(3,1)$ cone when $q=3 \pmod 4$ depends heavily on the fact that in that case, given $\boldsymbol{\xi}\in \Gamma^{3}_{(3,1)}\setminus \{\boldsymbol{0}\},$ the set of pairs $(\boldsymbol{\xi}_1,\boldsymbol{\xi}_2)\in \left(\Gamma^3_{(3,1)}\setminus \{\boldsymbol{0}\}\right)^2$ such that $\boldsymbol{\xi}_1+\boldsymbol{\xi}_2=\boldsymbol{\xi}$ are pairs of points in the same line $\{\lambda\boldsymbol{\xi}\; :\; \lambda \in \mathbb{F}^{\times}_q\}$. That rigid and simple structure, along with the fact that these lines provide a disjoint partition of the cone, turns out to be very useful for the estimates required there. In contrast, this property does not hold when $q=1 \pmod 4$ or the cone has signature $(2,2).$ Here, the structure of those pairs is more intricate and does not provide a partition of the set. Therefore, a more refined analysis is required.  

Our approach consists of, first, describing the cone in a way more amenable to our purposes. Then, determine precisely, for a given $\boldsymbol{\xi}\in \mathbb{F}_q^4$, the cardinality of the sets $$\Sigma_{\boldsymbol{\xi}}:=\left\{(\boldsymbol{\xi}_1,\boldsymbol{\xi}_2)\in \left(\Gamma^{3}\right)^{2}\, : \; \boldsymbol{\xi}_1+\boldsymbol{\xi}_2=\boldsymbol{\xi}\right\}.$$
After that, for a given $\boldsymbol{\xi}\in \Gamma^3$, to completely characterize the structures of $\Sigma_{\boldsymbol{\xi}}$. These sets can be understood as the union of two punctured planes that share a punctured line. The union of all $\underset{\boldsymbol{\xi}\in \Gamma^{3}}{\bigcup}\Sigma_{\boldsymbol{\xi}}$ describes a foliation of the cone. Following this, we observe that the value of $|\Sigma_{\boldsymbol{\xi}}|$ depends only on whether $\boldsymbol{\xi}$ is in $\Gamma^{3}$, $\{\boldsymbol{0}\}$ or $\mathbb{F}_q^{4}\setminus \left(\Gamma^3\cup \{\boldsymbol{0}\}\right).$ We call this third option the {\it generic points}. All this is described in Section \ref{sec: geometric structure}. 

After that, in Section \ref{sec:proof of thm1}, we use the symmetries of the problem in order to reduce the question to the even and non-negative functions $f$.  Then,  we estimate the contribution of a generic $\boldsymbol{\xi}$ on the LHS of \eqref{eq:combinatorialestimate} by using Cauchy--Schwarz on the inner term of the sum. Then we use the appropriate algebraic manipulation in order to concentrate the problem into estimates of points in the set $\Sigma_{\boldsymbol{\xi}}$ for $\boldsymbol{\xi}$ on the cone (and $\boldsymbol{0}$). To estimate the contribution of these remaining points is what concentrates the efforts in the present manuscript. For this, the already mentioned structure of $\Sigma_{\boldsymbol{\xi}}$ for points in the cone plays a crucial role.
 
In Section \ref{sec:classification}, we provide the classification of the maximizers stated in Theorem \ref{thm:classification}. For this, we use the characterization of each of the inequalities used in Section \ref{sec:proof of thm1}, and it allows us to prove that the maximizers are constant in absolute value and that their complex phase must fulfill a functional equation: the product of the phases of a pair of points in the cone only depends on the sum of these points. This rigid structure enables us to demonstrate that, over the planes contained in the cone, these phases behave almost like characters. To conclude our results, we study the interaction of disjoint punctured planes contained in the cone.  
 
\section*{Acknowledgements}
The first author was supported by the Spanish Ministry of Science, Innovation and Universities,  grant PID2023-150984NB-I00. CGR is also supported by the Spanish State Research Agency, through the Severo Ochoa and María de Maeztu Program for Centers and Units of Excellence in R\&D (CEX2020-001084-M) and thank CERCA Programme/Generalitat de Catalunya for institutional support. The authors are grateful to Emanuel Carneiro and Diogo Oliveira e Silva for helpful discussions. The second author is thankful to Ajay Gautam and Awais Gujjar for their remarks regarding the Segre embedding. 

\section*{Notation} For a finite set $A$ we denote its cardinality by $|A|$. Real and imaginary parts of a given complex number $z\in\mathbb{C}$ are  denoted by $\Re(z)$ and $\Im(z)$, and 
the principal value of the argument is $\operatorname{Arg}(z)\in(-\pi,\pi]$.
If $\mathcal{F}$ is a finite set of variables, then $C(\mathcal{F})$ denotes a quantity that only depends on elements of $\mathcal{F}$. For any $\boldsymbol{\rho}\in \mathbb{F}_{q}^{d}$, we use the notation $\boldsymbol{\rho}\coloneqq(\rho_1,\rho_2, \dots, \rho_d)$

Throughout this manuscript, we assume $p$ is an odd prime and $q=p^n$ for some positive integer $n$.

\section{Geometric structure of \texorpdfstring{$\Sigma_{\boldsymbol{\xi}}$}{}}\label{sec: geometric structure}
First let us observe that, by the change of variables $(\eta_1,\eta_2,\eta_3,\eta_4)\mapsto (\eta_1-\eta_3,\eta_1+\eta_3,\eta_4-\eta_2,\eta_4+\eta_2),$ we can use the definition $$\Gamma^3:=\left\{(\eta_1,\eta_2,\eta_3,\eta_4)\in \mathbb{F}^{4}_q\setminus \{\boldsymbol{0}\}\,:\; \eta_1\eta_2=\eta_3\eta_4\right\}.$$ 
 The first thing we need to observe is that there are exactly $(q-1)(q+1)^2$ points in $\Gamma^3$. This follows immediately by considering two cases: $\eta_1=0$ and $\eta_1\neq 0$, and then counting all the possible values for the remaining coordinates.

% \begin{align*}
% \shortintertext{ and for each $\xi\in \Gamma^3$ we set }
% H_{\xi}&\coloneqq\left\{ \eta \in \Gamma^3 \; | \;  (\xi_1-\eta_1)(\xi_2-\eta_2)=(\xi_3-\eta_3)(\xi_4-\eta_4)\right\}\\
%   \mathcal{L}_\xi&\coloneqq\{\lambda \xi\; : \;\lambda\in \mathbb{F}_q^\times\}
% \shortintertext{ and }
% L^{\circ}_\xi&\coloneqq\{\lambda \xi\; : \;\lambda\in \mathbb{F}_q^\times\setminus\{ 1\}\}.
% \end{align*}
% Note that the set of elements $\eta\in \Gamma^3$ such that $\xi -\eta \in \Gamma^3$  is exactly given by $H_\xi\setminus \{\xi\}$. 
As mentioned in the introduction, one of the key points of our argument is to provide a clear description of a certain collection of points that appear in the estimate. It turns out there is a natural parametrization of the cone $\Gamma^3$ which gives exactly that.
\begin{lemma} There is a bijection between $\Gamma^3$ and  the  Cartesian product $\mathbb{F}_q^\times \! \times \!\mathbb{P}^1\!\times\!\mathbb{P}^1$, where 
\begin{align*}
    \mathbb{P}^1&\coloneqq\left\{(1, y)\in \mathbb{F}^2_q\; |\; y\in \mathbb{F}_q\right\}\cup\left\{(0, 1)\right\}
\end{align*}
denotes one-dimensional projective space over $\mathbb{F}_q$.
\end{lemma}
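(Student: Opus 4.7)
The plan is to realize the bijection as the affine cone over the standard Segre embedding $\mathbb{P}^1\times\mathbb{P}^1\hookrightarrow\mathbb{P}^3$, whose image is precisely the projective quadric cut out by $\eta_1\eta_2=\eta_3\eta_4$. Concretely, I would define
$$\Phi\colon \mathbb{F}_q^{\times}\times\mathbb{P}^1\times\mathbb{P}^1\longrightarrow\mathbb{F}_q^4,\qquad \Phi\bigl(\lambda,(a,b),(c,d)\bigr)\,=\,\lambda\bigl(ac,\,bd,\,ad,\,bc\bigr),$$
and check that $\Phi$ actually lands in $\Gamma^3$: the identity $(ac)(bd)=(ad)(bc)$ supplies the defining equation of the cone, and since the chosen projective representatives always have at least one coordinate equal to $1$ and $\lambda\neq 0$, the image is never the zero vector (a one-line check in each of the four combinations of representatives).

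Rather than proving injectivity by hand, I would invoke the cardinality count noted just before the lemma: $|\Gamma^3|=(q-1)(q+1)^2=|\mathbb{F}_q^{\times}\times\mathbb{P}^1\times\mathbb{P}^1|$. Since $\Phi$ is a map between finite sets of equal size, it will suffice to verify surjectivity. I would do this by a short case analysis on whether $\eta_1$ vanishes. If $\eta_1\neq 0$, then setting $\lambda=\eta_1$, $(a,b)=(1,\eta_4/\eta_1)$, $(c,d)=(1,\eta_3/\eta_1)$ gives $\Phi(\lambda,(a,b),(c,d))=(\eta_1,\eta_3\eta_4/\eta_1,\eta_3,\eta_4)$, which equals $\boldsymbol{\eta}$ precisely because $\eta_1\eta_2=\eta_3\eta_4$. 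If $\eta_1=0$ then $\eta_3\eta_4=0$, so at least one of $\eta_3,\eta_4$ vanishes, and the three remaining subcases ($\eta_3=\eta_4=0$; $\eta_3=0\neq\eta_4$; $\eta_4=0\neq\eta_3$) are covered by the explicit values
$$\Phi(\lambda,(0,1),(0,1))=\lambda(0,1,0,0),\quad \Phi(\lambda,(0,1),(1,w))=\lambda(0,w,0,1),\quad \Phi(\lambda,(1,y),(0,1))=\lambda(0,y,1,0),$$
from which $\lambda$, and $y$ or $w$, are read off uniquely.

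There is no real obstacle here: the only thing that needs care is keeping the projective representatives in the fixed normal form $\mathbb{P}^1=\{(1,y):y\in\mathbb{F}_q\}\cup\{(0,1)\}$ specified in the statement, so that the case split is exhaustive and non-overlapping. Once that bookkeeping is in place, both well-definedness and surjectivity reduce to inspecting the four explicit formulas above, and the bijection follows from matching cardinalities.
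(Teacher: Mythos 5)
Your proposal is correct and takes essentially the same route as the paper: the map $\Phi(\lambda,(a,b),(c,d))=\lambda(ac,bd,ad,bc)$ is exactly the paper's parametrization $\pi$, and both arguments combine the explicit normal-form formulas with the cardinality count $\lvert\Gamma^3\rvert=(q-1)(q+1)^2=\lvert\mathbb{F}_q^\times\times\mathbb{P}^1\times\mathbb{P}^1\rvert$. The only (immaterial) difference is which half you check directly: the paper verifies injectivity and gets surjectivity from the count, while you verify surjectivity by the case split on $\eta_1$ and get injectivity from the count.
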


\begin{proof}
Consider the map $\pi:\mathbb{F}_q^\times \! \times \!\mathbb{P}^1\!\times\!\mathbb{P}^1 \to \Gamma^3$ given by 
    \begin{equation}\label{eq:parametrization}
   \pi(\lambda, \alpha, \beta)\coloneqq (\lambda\alpha_1\beta_1, \lambda \alpha_2\beta_2, \lambda \alpha_1\beta_2, \lambda\alpha_2\beta_1),
    \end{equation}
for every $(\lambda, \alpha, \beta)\in \mathbb{F}_q^\times \! \times \!\mathbb{P}^1\!\times\!\mathbb{P}^1$, where $\alpha=(\alpha_1, \alpha_2)$ and $\beta=(\beta_1, \beta_2)$.
We claim that $\pi$ is a bijection. Note the following identities, 
\begin{align*}
    &\pi(\lambda, (1, \alpha_2), (1,\beta_2))=(\lambda, \lambda\alpha_2\beta_2, \lambda\beta_2, \lambda\alpha_2);\\
    &\pi(\lambda, (1, \alpha_2), (0,1))=(0, \lambda\alpha_2, \lambda, 0);\\
    &\pi(\lambda, (0, 1), (1,\beta_2))= (0, \lambda\beta_2, 0, \lambda);\\
    &\pi(\lambda, (0, 1), (0, 1))=(0, \lambda, 0, 0).
\end{align*}
Using these identities we conclude that if $\pi(\lambda, \alpha, \beta)=\pi(\mu, \gamma, \delta)$ for some $(\lambda, \alpha, \beta)$ and $(\mu, \gamma, \delta)$ in $\mathbb{F}_q^\times \! \times \!\mathbb{P}^1\!\times\!\mathbb{P}^1$, then $\alpha=\gamma$, $\beta=\delta$ and $\lambda=\mu$. Consequently, $\pi$ is an injective function. In the other direction, since the order of $\Gamma^3$ is $(q-1)(q+1)^2$, which is the same as $\lvert \mathbb{F}_q^\times \! \times \!\mathbb{P}^1\!\times\!\mathbb{P}^1\rvert$, it can be concluded that $\pi$ is bijective.\qedhere
\end{proof}

\begin{remark}
   This parametrization is a special case of a general family of maps known as Segre embeddings in projective spaces \cite{Segre1891}.
\end{remark}

%Now, we are going to calculate the $2$-fold convolution of the normalized surface measure $\sigma$ on $\Gamma^3$ explicitly. 
%Recall that the inverse Fourier transform of the measure $\sigma$ is a function on $\Fq^4$ given by
%\begin{equation*}
%    \sigma^\vee(\eta)=\frac{1}{\lvert \Gamma^3\rvert}\sum_{\xi\in \Gamma^3} e(\eta \cdot \xi),
%\end{equation*}
%for each $\eta\in \Fq^4$. Using the relation between the Fourier transform and the convolution, we obtain that for every $\xi\in \Fq^4$
%\begin{equation*}
%\begin{split}
%     (\sigma \ast \sigma)(\xi)=\bigg(\left(\sigma^\vee\right)^2\bigg)^\wedge(\xi)&=\sum_{\eta\in\Fq^4} \left(\sigma^\vee(\eta)\right)^2 e(-\xi\cdot \eta)=\sum_{\eta\in\Fq^4} \left(\frac{1}{\lvert \Gamma^3\rvert}\sum_{x\in \Gamma^3} e(\eta \cdot x)\right)^2 e(-\xi\cdot \eta)\\
%     &=\frac{1}{\lvert \Gamma^3\rvert^2}\sum_{\eta\in\Fq^4}\sum_{u,v\in \Gamma^3} e\bigg(\eta\cdot (u+v-\xi)\bigg)=\frac{q^4}{\lvert \Gamma^3\rvert^2} \left\lvert E_\xi \right\rvert, 
%\end{split}
%\end{equation*}
%where $E_\xi\coloneqq\left\{(u, v)\in \Gamma^3\times\Gamma^3: u+v=\xi\right\}$. Hence, it suffices to know the order of the set $E_\xi$ for each $\xi\in \Fq^4$, and it will be the content of the next proposition.
Now, let us proceed with the study of the sets $\Sigma_{\boldsymbol{\xi}}.$ 
\begin{proposition}\label{prop:cardinality of E_x}
For each $\boldsymbol{\xi}\in \Fq^4$ the cardinality of the set $\Sigma_{\boldsymbol{\xi}}$ can be expressed as follows
    \begin{equation*}
        \lvert \Sigma_{\boldsymbol{\xi}}\rvert=\left\{\begin{aligned}
            &(q+1)^2(q-1),& & \text{ if } \boldsymbol{\xi}=0,\\
            &2q^2-q-2,& & \text{ if }  \boldsymbol{\xi} \in \Gamma^3,\\
            &q^2+q, & & \text{ otherwise} .
        \end{aligned}\right.
    \end{equation*}
\end{proposition}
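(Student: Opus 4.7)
The plan is to reduce the counting to a single linear equation in the parameter $\lambda$ using the Segre parametrization \eqref{eq:parametrization}. Write $\boldsymbol{\xi}_1=\pi(\lambda,\alpha,\beta)$, so that $\boldsymbol{\xi}_1=(\lambda\alpha_1\beta_1,\lambda\alpha_2\beta_2,\lambda\alpha_1\beta_2,\lambda\alpha_2\beta_1)$, and set $\boldsymbol{\xi}_2=\boldsymbol{\xi}-\boldsymbol{\xi}_1$. Let $\Delta:=\xi_1\xi_2-\xi_3\xi_4$, which vanishes precisely on $\Gamma^3\cup\{\boldsymbol{0}\}$. Expanding the defining equation of $\Gamma^3$ for $\boldsymbol{\xi}_2$, the quadratic terms cancel (because $\lambda\alpha_1\beta_1\cdot\lambda\alpha_2\beta_2=\lambda\alpha_1\beta_2\cdot\lambda\alpha_2\beta_1$), and one is left with the single linear relation
\begin{equation*}
\lambda\,L(\alpha,\beta)=\Delta,\qquad L(\alpha,\beta):=\alpha_1(\beta_1\xi_2-\beta_2\xi_4)+\alpha_2(\beta_2\xi_1-\beta_1\xi_3).
\end{equation*}
Thus $|\Sigma_{\boldsymbol{\xi}}|$ equals the number of triples $(\lambda,\alpha,\beta)\in\mathbb{F}_q^\times\times\mathbb{P}^1\times\mathbb{P}^1$ solving this equation, minus the (at most one) triple producing $\boldsymbol{\xi}_1=\boldsymbol{\xi}$.

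If $\boldsymbol{\xi}=\boldsymbol{0}$, the equation reads $0=0$ and every triple is admissible, giving $(q-1)(q+1)^2$; since $\boldsymbol{\xi}_2=-\boldsymbol{\xi}_1\neq\boldsymbol{0}$ automatically, no subtraction is needed. If $\boldsymbol{\xi}\notin\Gamma^3\cup\{\boldsymbol{0}\}$, then $\Delta\neq 0$, so for each $(\alpha,\beta)$ with $L(\alpha,\beta)\neq 0$ the value of $\lambda\in\mathbb{F}_q^\times$ is uniquely determined. For fixed $\beta\in\mathbb{P}^1$, writing $A(\beta)=\beta_1\xi_2-\beta_2\xi_4$ and $B(\beta)=\beta_2\xi_1-\beta_1\xi_3$, the simultaneous vanishing $A(\beta)=B(\beta)=0$ forces $\beta$ to be a left null vector of $\bigl(\begin{smallmatrix}\xi_2 & -\xi_3\\-\xi_4 & \xi_1\end{smallmatrix}\bigr)$, whose determinant is $\Delta\neq 0$; hence $(A(\beta),B(\beta))\neq (0,0)$ for every $\beta\in\mathbb{P}^1$, and the linear form $\alpha\mapsto L(\alpha,\beta)$ vanishes on exactly one point of $\mathbb{P}^1$. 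Thus there are $(q+1)\cdot q$ admissible $(\alpha,\beta)$, and $\boldsymbol{\xi}_1\neq\boldsymbol{\xi}$ is automatic since $\boldsymbol{\xi}\notin\Gamma^3$, yielding $q^2+q$.

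The substantive case is $\boldsymbol{\xi}\in\Gamma^3$. Using the Lemma, write $\boldsymbol{\xi}=\pi(\mu,\gamma,\delta)$ uniquely. A short substitution reveals the factorization
\begin{equation*}
L(\alpha,\beta)=\mu\,(\alpha_1\gamma_2-\alpha_2\gamma_1)(\beta_1\delta_2-\beta_2\delta_1),
\end{equation*}
so the equation $\lambda L(\alpha,\beta)=0$ reduces (since $\lambda,\mu\neq 0$) to $\alpha=\gamma$ in $\mathbb{P}^1$ or $\beta=\delta$ in $\mathbb{P}^1$. This is the geometric content alluded to in the introduction: $\Sigma_{\boldsymbol{\xi}}$ is the union of two punctured planes sharing a punctured line. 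Inclusion--exclusion on $\mathbb{P}^1\times\mathbb{P}^1$ gives $(q+1)+(q+1)-1=2q+1$ admissible pairs $(\alpha,\beta)$, each carrying $q-1$ free values of $\lambda$, for a total of $(2q+1)(q-1)=2q^2-q-1$ triples. Finally, exactly one of these triples, namely $(\lambda,\alpha,\beta)=(\mu,\gamma,\delta)$, produces $\boldsymbol{\xi}_1=\boldsymbol{\xi}$ and hence $\boldsymbol{\xi}_2=\boldsymbol{0}$; subtracting it yields $2q^2-q-2$.

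The main technical step is the factorization of $L(\alpha,\beta)$ in the last case, which is where the rigid ``two planes through a line'' structure of $\Sigma_{\boldsymbol{\xi}}$ emerges; everything else is a routine count in $\mathbb{P}^1\times\mathbb{P}^1$. The only bookkeeping point to watch is the single exclusion $\boldsymbol{\xi}_1=\boldsymbol{\xi}$, which contributes only when $\boldsymbol{\xi}\in\Gamma^3$.
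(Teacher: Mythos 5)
Your proof is correct. For the key case $\boldsymbol{\xi}\in\Gamma^3$ it coincides with the paper's argument: both parametrize via the Segre map and exploit the factorization $L(\alpha,\beta)=\mu(\alpha_1\gamma_2-\alpha_2\gamma_1)(\beta_1\delta_2-\beta_2\delta_1)$, so that the condition becomes $\alpha=\gamma$ or $\beta=\delta$, and both subtract the single triple giving $\boldsymbol{\xi}_2=\boldsymbol{0}$. Where you genuinely diverge is the generic case $\boldsymbol{\xi}\notin\Gamma^3\cup\{\boldsymbol{0}\}$: the paper abandons the parametrization there, normalizes ``without loss of generality $\xi_1=1$'', eliminates $\eta_2$ from the two quadratic conditions and counts solutions of the resulting Cartesian equation via a case split on whether $\eta_3=\xi_3\eta_1$; you instead keep the Segre coordinates throughout, observe that the membership condition for $\boldsymbol{\xi}-\boldsymbol{\xi}_1$ collapses to the single linear equation $\lambda L(\alpha,\beta)=\Delta$ (the quadratic terms cancelling precisely because $\boldsymbol{\xi}_1$ lies on the cone), and count admissible $(\alpha,\beta)$ by noting that $(A(\beta),B(\beta))$ is $\beta$ times an invertible matrix of determinant $\Delta$. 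This buys you a uniform treatment of all three cases from one identity, avoids the normalization step and the Cartesian case analysis, and makes the ``two punctured planes through a punctured line'' structure visible directly in the parameters; the paper's Cartesian computation is more pedestrian but requires no observation about $L$ beyond direct elimination. Both yield the same counts $(q+1)^2(q-1)$, $2q^2-q-2$, and $q^2+q$.
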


\begin{proof}
First of all, when $\boldsymbol{\xi}=0$ we see that $\Sigma_{\boldsymbol{0}}=\{(\boldsymbol{\rho}, -\boldsymbol{\rho}): \boldsymbol{\rho}\in \Gamma^3 \}$, which clearly has the same order as $\Gamma^3$, i.e. $\lvert E_{\boldsymbol{0}}\rvert=(q-1)(q+1)^2$. Now, suppose $\boldsymbol{\xi}\neq \boldsymbol{0}$. 
% Then, the number of points in $E_\xi$ is the same as the number of solutions $\eta\notin \{0, \xi\}$ to the system of equations 
% \begin{align}
%     \eta_1\eta_2&=\eta_3\eta_4\\
%     (\xi_1-\eta_1)(\xi_2-\eta_2)&=(\xi_3-\eta_3)(\xi_4-\eta_4).
% \end{align}
% \begin{equation}
% \begin{split}
%     y_1y_2&=y_3y_4\\
%     (x_1-y_1)(x_2-y_2)&=(x_3-y_3)(x_4-y_4)
% \end{split}
% \end{equation}
% For each such solution $\eta$, we know that $(\eta, \xi-\eta)\in \Gamma^3\times\Gamma^3$, and with that, each element of $E_\xi$ will be counted exactly once. 
It is natural to consider the sets $H_{\boldsymbol{\xi}}$ given by 
\begin{equation}\label{eq:defintion of H_xi}
    H_{\boldsymbol{\xi}}\coloneqq\left\{\boldsymbol{ \eta }\in \Gamma^3\; \colon \; (\xi_1-\eta_1)(\xi_2-\eta_2)=(\xi_3-\eta_3)(\xi_4-\eta_4)\right\}.
\end{equation}
If $\boldsymbol{\xi} \in \Gamma^3$ then we see that $\boldsymbol{\xi}\in H_{\boldsymbol{\xi}}$. Since $(\boldsymbol{\xi}, \boldsymbol{0})$ is not an element of $\Sigma_{\boldsymbol{\xi}}$, it should be omitted in the count of $\Sigma_{\boldsymbol{\xi}}$. Since that is the only point we need to omit, we have the relation $\lvert \Sigma_{\boldsymbol{\xi}}\rvert=\lvert H_{\boldsymbol{\xi}}\rvert-1$. This is one of the points where the parametrization \eqref{eq:parametrization} has a clear advantage over the Cartesian coordinates. Suppose $\boldsymbol{\xi}, \boldsymbol{\eta} \in\Gamma^3,$ then  $\boldsymbol{\xi}=\pi(\lambda, \alpha,\beta)$ and $\boldsymbol{\eta}=\pi(\mu, \gamma,\delta)$. Hence, the condition in \eqref{eq:defintion of H_xi} is equivalent to the following
\begin{align*}
 \alpha_1\beta_1\gamma_2\delta_2+\alpha_2\beta_2\gamma_1\delta_1= \alpha_1\beta_2\gamma_2\delta_1+\alpha_2\beta_1\gamma_1\delta_2.
\end{align*}
By factoring out this equation, we see that it is satisfied if and only if at least one of the following identities holds: $\alpha=\gamma$ or $\beta=\delta$. Since $\lvert \mathbb{P}^1\rvert=q+1$ we conclude that $\lvert H_{\boldsymbol{\xi}}\rvert=(q-1)(2q+1)$ for each $\boldsymbol{\xi}\in \Gamma^3$. Thus, $\lvert \Sigma_{\boldsymbol{\xi}}\rvert=2q^2-q-2$ if $\boldsymbol{\xi}\in \Gamma^3$.

Now, suppose $\boldsymbol{\xi}\notin\Gamma^3$ and $\boldsymbol{\xi}\neq \boldsymbol{0}$. Notice that $\boldsymbol{\xi}\not\in H_{\boldsymbol{\xi}}$ in this case, therefore $\lvert \Sigma_{\boldsymbol{\xi}}\rvert=\lvert H_{\boldsymbol{\xi}}\rvert$. It is not restrictive to assume $\xi_1=1$ due to the symmetries of the equation in \eqref{eq:defintion of H_xi}. Hence, we can isolate $\eta_2$ in the equation and deduce
\begin{equation*}
    \eta_2=\xi_4\eta_3+\xi_3\eta_4-\xi_2\eta_1+\xi_2-\xi_3\xi_4.
\end{equation*}
We need to plug this into the equation $\eta_1\eta_2=\eta_3\eta_4$ to ensure that the solution $\eta$ belongs to the cone. The resulting equation is the following
\begin{equation*}
    (\xi_3\eta_1-\eta_3)\eta_4=\eta_1(\xi_2\eta_1-\xi_4\eta_3+\xi_3\xi_4-\xi_2)
\end{equation*}
If $\eta_3=\xi_3\eta_1$ then, the equation simplifies to $\eta_1(\eta_1-1)(\xi_2-\xi_3\xi_4)=0$. Since $\boldsymbol{\xi}\not \in \Gamma^3$ we see that $\eta_1=0$ or $\eta_1=1$ and for each case $\eta_4$ can be arbitrary number in $\Fq$. Hence, we get $2q$ different solutions in this situation. If $\eta_3\neq \xi_3\eta_1$ then, for each  $q$ different values of $\eta_1$, the coordinate $\eta_3$ can take $q-1$ different values. Hence, in this situation we have $q^2-q$ many solutions. Combining both situations we deduce that the set $H_{\boldsymbol{\xi}}$ has exactly $q^2+q$ elements.
\end{proof}

\begin{remark}
    Note that the set $H_{\boldsymbol{\xi}}$ has a structure easier to deal with when $\boldsymbol{\xi} \in\Gamma^3$ compared to the other situation. In such a case, the following properties of the set $H_{\boldsymbol{\xi}}$ must be noted.
    \begin{enumerate}[label=\textnormal{(}\it{\roman*}\textnormal{)}]
        \item If $\boldsymbol{\eta} \in H_{\boldsymbol{\xi}}$ then $\lambda \boldsymbol{\eta}\in H_{\boldsymbol{\xi}}$, for each $\lambda\in \Fq^\times$; Also, $H_{\boldsymbol{\xi}}=H_{\lambda\boldsymbol{\xi}},$ for any $\lambda\in \mathbb{F}_q^\times$.
        % $\eta\in H_\xi \iff  \xi -\eta \in H_\xi$. Thus
        \item $\boldsymbol{\eta} \in H_{\boldsymbol{\xi}}\setminus\{\boldsymbol{\xi}\} \iff  \boldsymbol{\xi} -\boldsymbol{\eta} \in H_{\boldsymbol{\xi}}\setminus\{\boldsymbol{\xi}\};$
        \item $\boldsymbol{\eta}\in H_{\boldsymbol{\xi}} \iff \boldsymbol{\xi} \in H_{\boldsymbol{\eta}}$
    \end{enumerate}
\end{remark}

% \subsection{Parametrization of the sets \texorpdfstring{$H_\xi$}{H\_xi}}
We are going to look closely at the sets $H_{\boldsymbol{\xi}}$. 
Recall that for each $\boldsymbol{\xi}\in \Gamma^3$ we established in the proof of Proposition \ref{prop:cardinality of E_x} that $H_{\boldsymbol{\xi}}$ is made of two parts.
% Since $H_\xi=H_{\lambda\xi}$, for any $\lambda\in \mathbb{F}_q^\times$, it is enough to consider the elements $\xi\in \mathcal{L}\coloneqq\pi\left(\{1\}\times \mathbb{P}^1 \times \mathbb{P}^1 \right)$. Let  $\xi=\pi(1, \alpha,\beta)$ and $\eta=\pi(\mu, \gamma,\delta)$, then the conditions for $\eta\in H_\xi$ could be formulated as follows: 
% \begin{align*}
%     (\xi_1-\eta_1)(\xi_2-\eta_2)&=(\xi_3-\eta_3)(\xi_4-\eta_4)\\
%     &\Updownarrow\\
%     \alpha_1\beta_1\gamma_2\delta_2+\alpha_2\beta_2\gamma_1\delta_1&= \alpha_1\beta_2\gamma_2\delta_1+\alpha_2\beta_1\gamma_1\delta_2\\
%      &\Updownarrow\\
%      (\alpha_1\gamma_2-\alpha_2\gamma_1)&(\beta_1\delta_2-\beta_2\delta_1)=0\\
%       &\Updownarrow\\
%       \alpha=\gamma\; &\text{ or }\;\beta=\delta.
% \end{align*}
If $\boldsymbol{\xi}=\pi(\lambda, \alpha,\beta)$ then we have seen that 
\begin{equation*}
    H_{\boldsymbol{\xi}}=\{\pi(\mu, \alpha, y) : \; (\mu, y)\in \mathbb{F}_q^\times\!\times\! \mathbb{P}^1\}\cup \{\pi(\mu, x, \beta) : \; (\mu, x)\in \mathbb{F}_q^\times\!\times\! \mathbb{P}^1\}.
\end{equation*}
Since these sets have geometric interpretations and are very crucial for our estimates, we further denote the following disjoint subsets:
\begin{align*}
    H_{\boldsymbol{\xi}}^+&\coloneqq\{\pi(\mu, \alpha, y) : \; \mu\in \mathbb{F}_q^\times, \;\; y\in \mathbb{P}^1\setminus\{\beta\}\},\\
    H_{\boldsymbol{\xi}}^-&\coloneqq\{\pi(\mu, x, \beta) : \; \mu\in \mathbb{F}_q^\times, \;\; x\in \mathbb{P}^1\setminus\{\alpha\}\},\\
\shortintertext{ and the line }
    \mathcal{L}_{\boldsymbol{\xi}}&\coloneqq\{\mu \boldsymbol{\xi} : \; \mu \in \mathbb{F}_q^\times\}=\{\pi(\mu, \alpha, \beta) : \; \mu\in \mathbb{F}_q^\times\}.
\end{align*}
% A_\xi^+&=H_\xi^+\cup \mathcal{L}_\xi=\{\pi(\mu, \alpha, y) : \; \mu\in \mathbb{F}_q^\times, \;\; y\in \mathbb{P}^1\},\\ 
% \shortintertext{and}
% A_\xi^-&=H_\xi^-\cup \mathcal{L}_\xi =\{\pi(\mu, x, \beta) : \; \mu\in \mathbb{F}_q^\times, \;\; x\in \mathbb{P}^1\}./
It is now clear that $H_{\boldsymbol{\xi}}= H_{\boldsymbol{\xi}}^+\cup H_{\boldsymbol{\xi}}^-\cup \mathcal{L}_{\boldsymbol{\xi}}$ and $\lvert  H_{\boldsymbol{\xi}}^\pm\rvert=q(q-1)$ and $\lvert \mathcal{L}_{\boldsymbol{\xi}}\rvert=q-1$.

Look at the sets $H_{\boldsymbol{\xi}}^+\cup \mathcal{L}_{\boldsymbol{\xi}}$ and $H_{\boldsymbol{\xi}}^-\cup \mathcal{L}_{\boldsymbol{\xi}}$ in Cartesian coordinates for the case $\xi_i\neq 0, i=1, 2, 3, 4$. 
\begin{align*}
        H_{\boldsymbol{\xi}}^+\cup \mathcal{L}_{\boldsymbol{\xi}}&=\{\pi(\mu, \alpha, y) : \; (\mu, y)\in \mathbb{F}_q^\times\!\times\! \mathbb{P}^1\}\\
        &=\{(t\xi_1, s\xi_2, s\xi_3, t\xi_4): t, s\in \Fq \text{ and } (t, s)\neq (0,0)\},
    \shortintertext{and similarly}
         H_{\boldsymbol{\xi}}^-\cup \mathcal{L}_{\boldsymbol{\xi}}&=\{(t\xi_1, s\xi_2, t\xi_3, s\xi_4): t, s\in \Fq \text{ and } (t, s)\neq (0,0)\}.
\end{align*}
Note that this parametrization will be slightly different if some of the coordinates of $\boldsymbol{\xi}$ are zeros. However, one can see that in all cases, the sets $H_{\boldsymbol{\xi}}^+\cup \mathcal{L}_{\boldsymbol{\xi}}$ and $H_{\boldsymbol{\xi}}^-\cup \mathcal{L}_{\boldsymbol{\xi}}$ are almost closed under the addition of $\Fq^4$; we need to include $\boldsymbol{0}$ to achieve this property. Therefore, geometrically, they represent punctured planes in $\mathbb{F}_q^4$. For simplicity, we denote these sets by $A_{\boldsymbol{\xi}}^+$ and $A_{\boldsymbol{\xi}}^-$, respectively.
% Indeed, if $x, y\in H_\xi^+\cup \mathcal{L}_\xi$ then 
% \begin{equation*}
%     \xi-\eta=\pi(\lambda, \alpha,\beta)-\pi(\mu, \alpha, y)=\left\{ \begin{aligned}
%         &\pi\left(\lambda \beta_1-\mu y_1, \alpha, \left(1, \frac{\lambda \beta_2-\mu y_2}{\lambda \beta_1-\mu y_1}\right)\right), &\text{ if } &\lambda \beta_1-\mu y_1\neq 0\\
%         & \pi\left(\lambda \beta_2-\mu y_2, \alpha, \left(0, 1\right)\right), & \text{ if } &\lambda \beta_1-\mu y_1=0
%     \end{aligned}\right.
% \end{equation*}

\begin{proposition}\label{prop: properties of planes}
    For points $\boldsymbol{\xi}$ and $\boldsymbol{\eta}$ in the cone $\Gamma^3$ the following statements hold
    \begin{enumerate}[label=\textnormal{(}\textit{\roman*}\textnormal{)}]
    \itemsep-0.3em 
         \item $H_{\boldsymbol{\xi}}^\pm=H_{\lambda \boldsymbol{\xi}}^\pm$ and $A_{\boldsymbol{\xi}}^\pm=A_{\lambda \boldsymbol{\xi}}^\pm$, for all $\lambda \in \Fq^\times$;
        \item $\boldsymbol{\eta}\in H_{\boldsymbol{\xi}}^\pm \iff \boldsymbol{\xi}\in H_{\boldsymbol{\eta}}^\pm$; 
        \item $\boldsymbol{\eta} \in H_{\boldsymbol{\xi}}^\pm \iff \boldsymbol{\xi} +\boldsymbol{\eta} \in H_{\boldsymbol{\xi}}^\pm$;
        \item The sets $A_{\boldsymbol{\xi}}^\pm$ and $A_{\boldsymbol{\eta}}^\pm$ are either disjoint or they are the same;
        % \item $A_{\xi}^\pm\cap A_{\eta}^\pm=\left\{\begin{aligned}
            % &\emptyset,& &\text{ if } \eta\not\in A_\xi^\pm;\\
            % &A_\xi^\pm=A_\eta^\pm,& &\text{ otherwise };
        % \end{aligned}\right. $
        \item The sets $A_{\boldsymbol{\xi}}^\pm$ and $A_{\boldsymbol{\eta}}^\mp$ always intersect along a punctured line, where the punctured point is $\boldsymbol{0}$ in $\mathbb{F}_q^4$.
    \end{enumerate}
\end{proposition}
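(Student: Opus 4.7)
The plan is to base every assertion on two facts. First, writing $\boldsymbol{\xi} = \pi(\lambda, \alpha, \beta)$ with the Segre-type parametrization, a point $\boldsymbol{\eta} = \pi(\nu, \gamma, \delta) \in \Gamma^{3}$ belongs to $H_{\boldsymbol{\xi}}^{+}$ iff $\gamma = \alpha$ and $\delta \neq \beta$, and to $H_{\boldsymbol{\xi}}^{-}$ iff $\delta = \beta$ and $\gamma \neq \alpha$. Second, from the Cartesian description $A_{\boldsymbol{\xi}}^{+} \cup \{\boldsymbol{0}\} = \{(t\xi_1, s\xi_2, s\xi_3, t\xi_4) : (t,s) \in \mathbb{F}_q^{2}\}$ (and the analogous one for $A_{\boldsymbol{\xi}}^{-}$, with small modifications when some $\xi_i$ vanishes), the sets $A_{\boldsymbol{\xi}}^{\pm} \cup \{\boldsymbol{0}\}$ are two-dimensional $\mathbb{F}_q$-linear subspaces of $\mathbb{F}_q^{4}$.

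With these in hand, the routine items fall out quickly. (i) follows from $\lambda\boldsymbol{\xi} = \pi(\lambda\nu, \alpha, \beta)$, which leaves the projective labels $\alpha, \beta$ intact. (ii) is the observation that the defining conditions of $H_{\boldsymbol{\xi}}^{+}$ and $H_{\boldsymbol{\eta}}^{+}$ (``$\alpha$-components agree, $\beta$-components differ'') are symmetric in $\boldsymbol{\xi}$ and $\boldsymbol{\eta}$, and similarly for $H^{-}$. (iv) uses the uniqueness of $\pi$: $A_{\boldsymbol{\xi}}^{+}$ is determined by the $\alpha$-component of $\boldsymbol{\xi}$, so two such planes are disjoint if these $\alpha$'s differ and equal otherwise. (v) is a direct calculation: $A_{\boldsymbol{\xi}}^{+} \cap A_{\boldsymbol{\eta}}^{-} = \{\pi(\mu, \alpha, \delta) : \mu \in \mathbb{F}_q^{\times}\} = \mathcal{L}_{\pi(1, \alpha, \delta)}$, a punctured line.

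The substantive step is (iii). For the forward direction, $\boldsymbol{\xi}, \boldsymbol{\eta} \in A_{\boldsymbol{\xi}}^{+} \cup \{\boldsymbol{0}\}$ forces $\boldsymbol{\xi} + \boldsymbol{\eta} \in A_{\boldsymbol{\xi}}^{+} \cup \{\boldsymbol{0}\}$ by linearity; it remains to rule out $\boldsymbol{\xi} + \boldsymbol{\eta} \in \mathcal{L}_{\boldsymbol{\xi}} \cup \{\boldsymbol{0}\}$. Writing $\boldsymbol{\eta} = \pi(\mu, \alpha, y)$ with $y \neq \beta$ and reading off the Segre parameters of $\boldsymbol{\xi} + \boldsymbol{\eta}$, membership in $\mathcal{L}_{\boldsymbol{\xi}} \cup \{\boldsymbol{0}\}$ forces $y$ to be projectively proportional to $\beta$, contradicting $y \neq \beta$ in $\mathbb{P}^{1}$. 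For the converse, $\boldsymbol{\eta} = (\boldsymbol{\xi} + \boldsymbol{\eta}) - \boldsymbol{\xi} \in A_{\boldsymbol{\xi}}^{+} \cup \{\boldsymbol{0}\}$ by linearity; and $\boldsymbol{\eta} \notin \mathcal{L}_{\boldsymbol{\xi}}$ since otherwise, $\mathcal{L}_{\boldsymbol{\xi}}$ being a linear subspace of dimension one minus the origin, one would have $\boldsymbol{\xi} + \boldsymbol{\eta} \in \mathcal{L}_{\boldsymbol{\xi}} \cup \{\boldsymbol{0}\}$, which is disjoint from $H_{\boldsymbol{\xi}}^{+}$. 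Hence $\boldsymbol{\eta} \in H_{\boldsymbol{\xi}}^{+}$. The argument for $H_{\boldsymbol{\xi}}^{-}$ is entirely symmetric.

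The main obstacle is (iii): the cleanest way to prove it is to separate the three strata $H_{\boldsymbol{\xi}}^{+} \sqcup \mathcal{L}_{\boldsymbol{\xi}} \sqcup \{\boldsymbol{0}\}$ inside the plane $A_{\boldsymbol{\xi}}^{+} \cup \{\boldsymbol{0}\}$ and track their behaviour under addition. The Segre parametrization is what makes this bookkeeping tractable, reducing ``belonging to $\mathcal{L}_{\boldsymbol{\xi}}$ versus $H_{\boldsymbol{\xi}}^{+}$'' to an equality or inequality of projective points in $\mathbb{P}^{1}$; without it, the same analysis in Cartesian coordinates becomes an awkward case distinction depending on which $\xi_i$ vanish.
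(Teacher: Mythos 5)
Your proposal is correct and follows essentially the same route as the paper: all five items are read off from the Segre parametrization, with (i), (ii), (iv), (v) handled identically. The only cosmetic difference is in (iii), where you invoke linearity of $A_{\boldsymbol{\xi}}^{+}\cup\{\boldsymbol{0}\}$ and then exclude $\mathcal{L}_{\boldsymbol{\xi}}\cup\{\boldsymbol{0}\}$ via the projective coordinates, while the paper computes the $\pi$-coordinates of $\boldsymbol{\xi}+\boldsymbol{\eta}$ directly and handles the converse by applying the forward implication to $-\boldsymbol{\xi}-\boldsymbol{\eta}$; the underlying computation is the same.
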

\begin{proof}
Part $(i)$ follows immediately from the definitions of the sets $H_{\boldsymbol{\xi}}^\pm$. 
In the rest of the proof, we only focus on the cases $H_{\boldsymbol{\xi}}^+$ and $A_{\boldsymbol{\xi}}^+$; the other cases follow in an analogous way. Now, let $\boldsymbol{\xi}=\pi(\lambda, \alpha, \beta)$ and assume $\boldsymbol{\eta} \in H_{\boldsymbol{\xi}}^+$, this means $\boldsymbol{\eta}=\pi(\mu, \alpha, \gamma)$ for some $(\mu, \alpha, \gamma)\in \mathbb{F}_q^\times\!\times\!\mathbb{P}^1\!\times\!\mathbb{P}^1$ such that $\gamma\neq \beta$. Since $\boldsymbol{\xi}$ and $\boldsymbol{\eta}$ have a common second coordinate in this parametrization, it concludes the proof of $(ii)$. To prove $(iii)$, consider the sum 
\begin{equation*}
\begin{split}
    \boldsymbol{\xi}+\boldsymbol{\eta}&=\pi(\lambda, \alpha, \beta)+\pi(\mu, \alpha, \gamma)\\
    &=\bigg(\alpha_1(\lambda \beta_1+\mu \gamma_1), \alpha_2(\lambda \beta_2+\mu \gamma_2), \alpha_1(\lambda \beta_2+\mu \gamma_2), \alpha_2(\lambda \beta_1+\mu \gamma_1) \bigg)\\
    &=\left\{ \begin{aligned}
        &\pi\left(\lambda \beta_1+\mu \gamma_1, \alpha, \left(1, \frac{\lambda \beta_2+\mu \gamma_2}{\lambda \beta_1+\mu \gamma_1}\right)\right), &\text{ if } &\lambda \beta_1+\mu \gamma_1\neq 0\\
        & \pi\bigg(\lambda \beta_2+\mu \gamma_2, \alpha, \left(0, 1\right)\bigg), & \text{ if } &\lambda \beta_1+\mu \gamma_1=0
    \end{aligned}\right..
\end{split}
\end{equation*}
It shows that, in both cases, $\boldsymbol{\xi}+\boldsymbol{\eta}$ belongs to the set $H_{\boldsymbol{\xi}}^+$. Now suppose that $\boldsymbol{\xi}+\boldsymbol{\eta}\in H_{\boldsymbol{\xi}}^+$ for some $\boldsymbol{\eta}\in \Gamma^3$, then $-\boldsymbol{\xi}-\boldsymbol{\eta}$ also belongs to the same set. By what we have already proved, we obtain that  $-\boldsymbol{\eta}=\boldsymbol{\xi}+(-\boldsymbol{\xi}-\boldsymbol{\eta})\in H_{\boldsymbol{\xi}}^+ \implies \boldsymbol{\eta}\in H_{\boldsymbol{\xi}}^+$. 

For part $(iv)$, note that all the elements in $A_{\boldsymbol{\xi}}^+$ have the same second coordinate as $\boldsymbol{\xi}$ in the parametrization $\pi$. Therefore, if there is at least one element in the intersection $A_{\boldsymbol{\xi}}^+\cap A_{\boldsymbol{\eta}}^+$, we deduce that all of the elements in each of the sets have the same second coordinate, hence they coincide. 

Finally, for point $(v)$, assume that $ \boldsymbol{\xi} = \pi(\lambda, \alpha, \beta) $ and $ \boldsymbol{\eta} = \pi(\mu, \gamma, \delta) $. For a point $ \boldsymbol{\rho} \in \Gamma^3 $ to be an element of $ A_{\boldsymbol{\xi}}^+ \cap A_{\boldsymbol{\eta}}^- $, it is both necessary and sufficient that $ \boldsymbol{\rho} = \pi(\sigma, \alpha, \delta) $ for some $ \sigma \in \mathbb{F}_q^\times $. This collection of points is precisely a punctured line, as stated in the assertion.
\end{proof}
\begin{proposition}\label{prop:mixedproductidentity}
The following identity holds
    \begin{equation*}\label{eq:aplusproductaminusidentity}
    \sum_{\boldsymbol{\xi} \in \Gamma^3}\bigg(\sum\limits_{\boldsymbol{\eta}_1\in A^+_{\boldsymbol{\xi}}} f(\boldsymbol{\eta}_1)^2\bigg)\bigg(\sum\limits_{\boldsymbol{\eta}_2\in A^-_{\boldsymbol{\xi}}} f(\boldsymbol{\eta}_2)^2\bigg)= (q-1)\bigg(\sum_{\boldsymbol{\xi}\in\Gamma^3}  f(\boldsymbol{\xi}) ^2\bigg)^2.
    \end{equation*}
\end{proposition}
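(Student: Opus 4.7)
The plan is to swap the order of summation and for each pair $(\boldsymbol{\eta}_1,\boldsymbol{\eta}_2)\in (\Gamma^3)^2$ count the number of $\boldsymbol{\xi}\in\Gamma^3$ such that $\boldsymbol{\eta}_1\in A^+_{\boldsymbol{\xi}}$ and $\boldsymbol{\eta}_2\in A^-_{\boldsymbol{\xi}}$. The Segre-style parametrization $\pi$ makes this count transparent, so I would work entirely in the coordinates $(\lambda,\alpha,\beta)\in\mathbb{F}_q^\times\!\times\!\mathbb{P}^1\!\times\!\mathbb{P}^1$ rather than in Cartesian coordinates on $\mathbb{F}_q^4$.

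First I would record the following reformulation of the sets $A^\pm_{\boldsymbol{\xi}}$: if $\boldsymbol{\xi}=\pi(\lambda,\alpha,\beta)$, then merging the definitions of $H^\pm_{\boldsymbol{\xi}}$ with $\mathcal{L}_{\boldsymbol{\xi}}$ gives
\begin{equation*}
A^+_{\boldsymbol{\xi}}=\{\pi(\mu,\alpha,y):\mu\in\mathbb{F}_q^\times,\ y\in\mathbb{P}^1\},\qquad A^-_{\boldsymbol{\xi}}=\{\pi(\mu,x,\beta):\mu\in\mathbb{F}_q^\times,\ x\in\mathbb{P}^1\}.
\end{equation*}
In particular, writing $\boldsymbol{\eta}_1=\pi(\mu_1,a_1,b_1)$ and $\boldsymbol{\eta}_2=\pi(\mu_2,a_2,b_2)$, the conditions $\boldsymbol{\eta}_1\in A^+_{\boldsymbol{\xi}}$ and $\boldsymbol{\eta}_2\in A^-_{\boldsymbol{\xi}}$ are equivalent to $\alpha=a_1$ and $\beta=b_2$ respectively; they depend on disjoint $\mathbb{P}^1$ coordinates of $\boldsymbol{\xi}$ and impose no restriction on $\lambda$.

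Hence the set of admissible $\boldsymbol{\xi}$ is exactly $\{\pi(\lambda,a_1,b_2):\lambda\in\mathbb{F}_q^\times\}$, which has cardinality $q-1$ independently of $\boldsymbol{\eta}_1$ and $\boldsymbol{\eta}_2$. Interchanging the order of summation and applying this count yields
\begin{equation*}
\sum_{\boldsymbol{\xi}\in\Gamma^3}\bigg(\sum_{\boldsymbol{\eta}_1\in A^+_{\boldsymbol{\xi}}}f(\boldsymbol{\eta}_1)^2\bigg)\bigg(\sum_{\boldsymbol{\eta}_2\in A^-_{\boldsymbol{\xi}}}f(\boldsymbol{\eta}_2)^2\bigg)=\sum_{\boldsymbol{\eta}_1,\boldsymbol{\eta}_2\in\Gamma^3}(q-1)f(\boldsymbol{\eta}_1)^2f(\boldsymbol{\eta}_2)^2,
\end{equation*}
and factoring the right-hand side produces $(q-1)\bigl(\sum_{\boldsymbol{\xi}\in\Gamma^3}f(\boldsymbol{\xi})^2\bigr)^2$, as claimed.

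There is no real obstacle here; the entire content is to recognize that, in the $\pi$-parametrization, the two membership conditions decouple into a condition on $\alpha$ and a condition on $\beta$, with $\lambda$ free. The only thing that requires a bit of care is making sure $\mathcal{L}_{\boldsymbol{\xi}}$ is correctly absorbed into the description of $A^\pm_{\boldsymbol{\xi}}$ (so that $y=\beta$ and $x=\alpha$ are allowed), which is exactly why the count is $q-1$ rather than a more complicated expression.
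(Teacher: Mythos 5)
Your proof is correct and rests on the same key observation as the paper's: in the Segre parametrization the membership conditions for $A^+_{\boldsymbol{\xi}}$ and $A^-_{\boldsymbol{\xi}}$ decouple into a condition on $\alpha$ and one on $\beta$, with $\lambda$ free, producing the factor $q-1$. The only difference is the direction in which you apply Fubini (counting admissible $\boldsymbol{\xi}$ per pair $(\boldsymbol{\eta}_1,\boldsymbol{\eta}_2)$, whereas the paper fixes $\boldsymbol{\xi}=\pi(\lambda,\alpha,\beta)$ and separates the resulting sums), which is an immaterial transposition of the same computation.
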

\begin{proof}
One can rewrite each of the sums on the left-hand side using the parametrization $\pi$. Consider the terms that correspond to $\boldsymbol{\xi}=\pi(\lambda, \alpha, \beta)$
\begin{align*}
        \sum\limits_{\boldsymbol{\eta}_1\in A^+_{\boldsymbol{\xi}}}f(\boldsymbol{\eta}_1)^2=\sum_{\substack{\mu\in \mathbb{F}_q^\times\\ y\in \mathbb{P}^1}} f(\pi(\mu, \alpha, y))^2\\
\shortintertext{ and }
  \sum\limits_{\boldsymbol{\eta}_1\in A^-_{\boldsymbol{\xi}}}f(\boldsymbol{\eta}_1)^2=\sum_{\substack{\mu\in \mathbb{F}_q^\times\\ x\in \mathbb{P}^1}} f(\pi(\mu, x, \beta))^2.
\end{align*}
Thus, the sum on the left looks like the following
\begin{align*}
     \sum_{\substack{\lambda\in \mathbb{F}_q^\times\\ \alpha, \beta \in \mathbb{P}^1}} & \bigg(\sum_{\substack{\mu_1\in \mathbb{F}_q^\times\\ y\in \mathbb{P}^1}} f(\pi(\mu_1, \alpha, y))^2\bigg) \bigg(\sum_{\substack{\mu_2\in \mathbb{F}_q^\times\\ x\in \mathbb{P}^1}} f(\pi(\mu_2, x, \beta))^2\bigg)\\
     &= \sum_{\substack{\lambda\in \mathbb{F}_q^\times\\ \alpha, \beta \in \mathbb{P}^1}} \sum_{\substack{\mu_1, \mu_2\in \mathbb{F}_q^\times\\ x, y\in \mathbb{P}^1}} f(\pi(\mu_1, \alpha, y))^2 f(\pi(\mu_2, x, \beta))^2\\
     &=\sum_{\lambda\in \mathbb{F}_q^\times} \sum_{\substack{\mu_1\in \mathbb{F}_q^\times\\ \alpha, y\in \mathbb{P}^1}} \sum_{\substack{\mu_2\in \mathbb{F}_q^\times\\ x,\beta\in \mathbb{P}^1}}f(\pi(\mu_1, \alpha, y))^2 f(\pi(\mu_2, x, \beta))^2.
\end{align*}
It is now clear that the two inner sums can be separated, and they are identical sums. Therefore, we can conclude the desired identity.
\end{proof}

\section{Proof of Theorem \ref{thm:sharprestriction}} \label{sec:proof of thm1}
In this section, we provide the argument that allows us to conclude the sharp constant in \eqref{eq:combinatorialestimate}.

\emph{Step 1: Symmetrization}.
 To begin, we utilize the method of antipodal symmetrization, which appeared in works \cite{ChristShaoExistenceOfExtremizers, FoschiSphereRestrictionMaximizers}.
Note that right-hand side of \eqref{eq:combinatorialestimate} does not change if we replace $f(\boldsymbol{\xi})$ by $f_{\sharp}(\boldsymbol{\xi})\coloneqq\sqrt{\frac{\lvert f(\boldsymbol{\xi})\rvert^2 +\lvert f(-\boldsymbol{\xi})\rvert^2}{2}}$, for each $\boldsymbol{\xi}\in \Gamma^3$. Moreover, one can show this change does not reduce the left-hand side of the inequality. Indeed, 
\begin{equation*}
    \begin{split}
          \sum_{\boldsymbol{\xi}\in \mathbb{F}_q^4}\bigg\lvert \sum_{\substack{\boldsymbol{\eta}_1, \boldsymbol{\eta}_2\in \Gamma^3 \\ \boldsymbol{\eta}_1+\boldsymbol{\eta}_2=\boldsymbol{\xi}}} &f(\boldsymbol{\eta}_1)f(\boldsymbol{\eta}_2) \bigg\rvert^2\\
          &=\int_{(\Gamma^3)^4} f(\boldsymbol{\eta}_1)f(\boldsymbol{\eta}_2)\overline{f(-\boldsymbol{\eta}_3)}\overline{f(-\boldsymbol{\eta}_4)}\underbrace{\mathbf{\delta}(\boldsymbol{\eta}_1+\boldsymbol{\eta}_2+\boldsymbol{\eta}_3+\boldsymbol{\eta}_4)\d \boldsymbol{\eta}_1\d\boldsymbol{\eta}_2\d\boldsymbol{\eta}_3\d\boldsymbol{\eta}_4}_{\d \Sigma}\\
          &=\int_{(\Gamma^3)^4} f(\boldsymbol{\eta}_1)\overline{f(-\boldsymbol{\eta}_2)}f(\boldsymbol{\eta}_3)\overline{f(-\boldsymbol{\eta}_4)}{\d \Sigma}= Q(f, f, f, f),
    \end{split}
\end{equation*}
where 
\begin{equation*}
        Q(f_1, f_2, f_3, f_4)\coloneqq  \int_{(\Gamma^3)^4}{f_1(\boldsymbol{\eta}_1)} \overline{f_2(-\boldsymbol{\eta}_2)}f_3(\boldsymbol{\eta}_3)\overline{f_4(-\boldsymbol{\eta}_4)}{\d \Sigma}.
\end{equation*}
Symmetry of the measure $\d \Sigma$ allows us to write 
\begin{equation*}
     Q(f, f, f, f)= Q(f^\ast, f^\ast, f, f) 
\end{equation*}
where $f^\ast(\boldsymbol{\xi})\coloneqq\overline{f(-\boldsymbol{\xi})}$.
Hence, we take the average of these two expressions
\begin{equation*}
    \begin{split}
         Q(f, f, f, f)&=\int_{(\Gamma^3)^4} \bigg(\frac{f(\boldsymbol{\eta}_1)\overline{f(-\boldsymbol{\eta}_2)}+\overline{f(-\boldsymbol{\eta}_1)} f(\boldsymbol{\eta}_2)}{2}\bigg)f(\boldsymbol{\eta}_3)\overline{f(-\boldsymbol{\eta}_4)}{\d \Sigma}\\
         &=\operatorname{Re}\int_{(\Gamma^3)^4} \bigg(\frac{f(\boldsymbol{\eta}_1)\overline{f(-\boldsymbol{\eta}_2)}+\overline{f(-\boldsymbol{\eta}_1)} f(\boldsymbol{\eta}_2)}{2}\bigg)f(\boldsymbol{\eta}_3)\overline{f(-\boldsymbol{\eta}_4)}{\d \Sigma}.
    \end{split}
\end{equation*}
Using the inequality $\operatorname{Re}(z)\leq \lvert z\rvert$ and then Cauchy-Schwarz we get
\begin{equation*}
    \begin{split}
          Q(f, f, f, f)  &\leq \int_{(\Gamma^3)^4} \bigg\lvert\frac{f(\boldsymbol{\eta}_1)\overline{f(-\boldsymbol{\eta}_2)}+\overline{f(-\boldsymbol{\eta}_1)} f(\boldsymbol{\eta}_2)}{2}\bigg\rvert \left\lvert f(\boldsymbol{\eta}_3)\overline{f(-\boldsymbol{\eta}_4)}\right\rvert{\d \Sigma}\\
         &\stackrel{C-S}{\leq} \int_{(\Gamma^3)^4} f_{\sharp}(\boldsymbol{\eta}_1)f_{\sharp}(\boldsymbol{\eta}_2)\lvert f(\boldsymbol{\eta}_3)\rvert \lvert f(-\boldsymbol{\eta}_4)\rvert{\d \Sigma}=Q(f_\sharp, f_\sharp, \lvert f\rvert, \lvert f\rvert).
    \end{split}
\end{equation*}
Note that the equality is attained if and only if the following conditions are satisfied
\begin{itemize}
    \item $\left(f(\boldsymbol{\eta}_1)\overline{f(-\boldsymbol{\eta}_2)}+\overline{f(-\boldsymbol{\eta}_1)} f(\boldsymbol{\eta}_2)\right)f(\boldsymbol{\eta}_3)\overline{f(-\boldsymbol{\eta}_4)}\geq0 $, for all $\boldsymbol{\eta}_i\in \Gamma^3$ such that $\boldsymbol{\eta}_1+\boldsymbol{\eta}_2+\boldsymbol{\eta}_3+\boldsymbol{\eta}_4=0$. 
    % and $\boldsymbol{\eta}_1\neq -\boldsymbol{\eta}_2, \boldsymbol{\eta}_3\neq -\boldsymbol{\eta}_4$ \textcolor{red}{for this to make sense don't you need the value inside the arg to be nonzero?}. The equality holds trivially if the latter conditions fail \textcolor{red}{what does this exactly means?}.
    \item $f(\boldsymbol{\eta}_1)=C(\boldsymbol{\eta}_1, \boldsymbol{\eta}_2){f(-\boldsymbol{\eta}_2)}$ and $\overline{f(-\boldsymbol{\eta}_1)}=C(\boldsymbol{\eta}_1, \boldsymbol{\eta}_2)\overline{f(\boldsymbol{\eta}_2)}$, for some $C(\boldsymbol{\eta}_1, \boldsymbol{\eta}_2)\in \mathbb{C}$, for all $\boldsymbol{\eta}_1, \boldsymbol{\eta}_2\in \Gamma^3$;
    % \item $\lvert f(\eta_3)\rvert=C(\eta_3, \eta_4)\lvert f(-\eta_4)\rvert$ and $\lvert f(-\eta_3)\rvert=C(\eta_3, \eta_4)\lvert f(\eta_4)\rvert$, for some $C(\eta_3, \eta_4)\in \mathbb{R}_{> 0}$,  for all $\eta_3, \eta_4\in \Gamma^3$;
\end{itemize}

Also notice that $Q(f, f, f, f)=Q(f, f, f^\ast, f^\ast)$, therefore, in the same way we can show 
\begin{equation}\label{eq:multilinear form inequality}
     Q(f, f, f, f) \leq Q(f_\sharp, f_\sharp, \lvert f\rvert, \lvert f\rvert)\leq Q(f_\sharp, f_\sharp, f_\sharp, f_\sharp).
\end{equation}
The equality in the last inequality also occurs if $f$ satisfies the previous conditions. 

Henceforth, we may assume that the function $f$ is even and takes only non-negative values.

% Combining all the conditions, we deduce, 
% \begin{equation*}
%  \operatorname{Arg}\bigg(C(\eta_1, \eta_2)f(\eta_3)\overline{f(-\eta_4)}\bigg)=0, \; \text{ for all }\; \eta_1+\eta_2+\eta_3+\eta_4=0, \;\eta_i\in \Gamma^3.
% \end{equation*}
% equivalently, for the function $\varphi(\eta)\coloneqqf(\eta)/\lvert f(\eta)\rvert$, we know
% \begin{equation*}
%     \varphi(\eta_1)\overline{\varphi(-\eta_2)}= \varphi(-\eta_4)\overline{\varphi(\eta_3)},
% \end{equation*}
% for all $\eta_i\in \Gamma^3$ such that $\eta_1+\eta_2+\eta_3+\eta_4=0$.
% }

For simplicity, we introduce the following notation for expressions that appear frequently throughout the proof. Let $g$ be a function on $\Gamma^3$ for any $A\subseteq\Gamma^3$ and $\boldsymbol{\xi}\in \Fq^4$ we set
\begin{align*}
    \sum_{A} g&\coloneqq\sum_{\boldsymbol{\eta} \in A} g(\boldsymbol{\eta}),\\
    \sum_{A, \; \boldsymbol{\xi}} g \!\cdot \! g&\coloneqq \sum_{\substack{\boldsymbol{\eta} \in A\\ (\boldsymbol{\xi}-\boldsymbol{\eta})\in\Gamma^3}} g(\boldsymbol{\eta})g(\boldsymbol{\xi}-\boldsymbol{\eta}).
\end{align*}

\emph{Step 2: Mass transport.}
We split the sum on the left side of \eqref{eq:combinatorialestimate} into three parts $\Gamma^3$ and $\{\boldsymbol{0}\}$, and the rest of the points in $\mathbb{F}_q^4$, 
\begin{equation*}
    \begin{split}
        \sum_{\boldsymbol{\xi}\in \Fq^4} \bigg(\sum\limits_{\Gamma^3, \; \boldsymbol{\xi}}f \!\cdot \! f\bigg)^2=\sum\limits_{\boldsymbol{\xi} \in \Gamma^3}\bigg( \sum\limits_{\Gamma^3, \; \boldsymbol{\xi}}f \!\cdot \! f\bigg)^2 &+\bigg( \sum\limits_{\boldsymbol{\xi}\in \Gamma^3}f(\boldsymbol{\xi}) f(-\boldsymbol{\xi} )\bigg)^2+\sum\limits_{\boldsymbol{\xi} \not\in \Gamma_0^3}\bigg( \sum\limits_{\Gamma^3, \; \boldsymbol{\xi}}f \!\cdot \! f\bigg)^2,  \\
        %-q(q+1)\sum\limits_{\xi \in \Gamma^3}\sum\limits_{\substack{\eta\in \Gamma^3\\
        % (\xi -\eta) \in \Gamma^3 }} f(\eta)^2 f(\xi -\eta)^2\\
        % &- q(q+1) \sum\limits_{\eta \in \Gamma^3} f(\eta)^4
        % \leq \frac{2q^4-5q^3-5q^2+5q+4}{(q-1)(q+1)^2}\left(\sum\limits_{\xi \in \Gamma^3} f(\xi)^2\right)^2
    \end{split}
\end{equation*}
where $\Gamma_0^3=\Gamma^3 \cup \{\boldsymbol{0}\}$.

To begin with, we estimate the terms outside the set $\Gamma_0^3$ by employing the Cauchy-Schwarz inequality alongside the precise formula laid out in Proposition \ref{prop:cardinality of E_x}. Subsequently, we complete the expression by strategically adding and subtracting suitable terms as follows
\begin{equation*}
    \begin{split}
        \sum\limits_{\boldsymbol{\xi} \not\in \Gamma_0^3}\bigg( \sum\limits_{\Gamma^3, \; \boldsymbol{\xi}}f \cdot f \bigg)^2&\leq \sum\limits_{\boldsymbol{\xi} \not\in \Gamma_0^3}\bigg(\sum\limits_{\Gamma^3, \; \boldsymbol{\xi}} 1 \! \cdot \! 1 \bigg)\bigg(\sum\limits_{\Gamma^3, \; \boldsymbol{\xi}}f^2 \! \cdot \! f^2 \bigg) = (q^2+ q) \sum_{\boldsymbol{\xi} \not\in \Gamma^3_0}  \bigg(\sum\limits_{\Gamma^3, \; \boldsymbol{\xi}}f^2 \! \cdot\!  f^2\bigg)\\
        &=(q^2+q)\sum\limits_{\boldsymbol{\xi} \in \mathbb{F}_q^4}\bigg(\sum\limits_{\Gamma^3, \; \boldsymbol{\xi}}f^2 \! \cdot \! f^2 \bigg) -(q^2+q)\sum\limits_{\boldsymbol{\xi} \in \Gamma_0^3}\bigg(\sum\limits_{\Gamma^3, \; \boldsymbol{\xi}}f^2 \! \cdot \! f^2 \bigg) \\
        &=(q^2+q)\bigg(\sum\limits_{\Gamma^3} f^2\bigg)^2-(q^2+q)\sum\limits_{\boldsymbol{\xi} \in \Gamma^3}\bigg(\sum\limits_{\Gamma^3, \; \boldsymbol{\xi}}f^2 \! \cdot \! f^2 \bigg) -(q^2+q)\sum\limits_{\Gamma^3}f^4,\\
    \end{split}
\end{equation*}
where the first term in the last line comes from the change of the order of summation in the corresponding sum above.
Observe that equality is achieved when, for every fixed $\boldsymbol{\xi} \not\in \Gamma_0^3$, the identity $f(\boldsymbol{\xi}-\boldsymbol{\eta})=C(\boldsymbol{\xi})f(\boldsymbol{\eta})$ holds for every $\boldsymbol{\eta} \in \Gamma^3$ with $\boldsymbol{\xi} - \boldsymbol{\eta} \in \Gamma^3$, where $C(\boldsymbol{\xi})$ is a constant depending solely on $\boldsymbol{\xi}$.

Based on this estimate, we deduce that to show \eqref{eq:combinatorialestimate} it suffices to prove the following inequality
\begin{equation}\label{eq:mainestimate}
    \begin{split}
        \sum\limits_{\boldsymbol{\xi} \in \Gamma^3}\bigg( \sum\limits_{H_{\boldsymbol{\xi}}, \;\boldsymbol{\xi}}&f \! \cdot \! f\bigg)^2 -q(q+1)\sum\limits_{\boldsymbol{\xi} \in \Gamma^3}\sum\limits_{H_{\boldsymbol{\xi}}, \;\boldsymbol{\xi}} f^2 \! \cdot \! f^2\\\
        &- q(q+1) \sum\limits_{\Gamma^3} f^4
        -\frac{2q^4-5q^3-5q^2+5q+4}{(q-1)(q+1)^2}\left(\sum\limits_{\Gamma^3} f^2\right)^2\leq 0.
\end{split}
\end{equation}

Now we utilize the geometric information about the sets $H_{\boldsymbol{\xi}}$. Rewrite the first term by separating the following sets in the inner sum $\mathcal{L}^\circ_{\boldsymbol{\xi}}\coloneqq \mathcal{L}_{\boldsymbol{\xi}}\setminus\{\boldsymbol{\xi}\}$, $H_{\boldsymbol{\xi}}^+$, and $H_{\boldsymbol{\xi}}^-$.
\begin{equation*}
    \begin{split}
        \sum\limits_{\boldsymbol{\xi} \in \Gamma^3}\bigg( \sum\limits_{H_{\boldsymbol{\xi}}, \;\boldsymbol{\xi}}f \! \cdot \! f\bigg)^2= &\sum\limits_{\boldsymbol{\xi} \in \Gamma^3}\bigg\{\bigg( \sum\limits_{ \mathcal{L}^\circ_{\boldsymbol{\xi}}, \; \boldsymbol{\xi}}f \! \cdot \! f\bigg)^2 +\bigg( \sum\limits_{H_{\boldsymbol{\xi}}^+, \; \boldsymbol{\xi}}f \! \cdot \! f\bigg)^2+\bigg( \sum\limits_{H_{\boldsymbol{\xi}}^-, \; \boldsymbol{\xi}}f \! \cdot \! f\bigg)^2 \bigg\}\\
        &+2\sum_{\boldsymbol{\xi}\in\Gamma^3}\bigg\{\bigg(\sum\limits_{\mathcal{L}^\circ_{\boldsymbol{\xi}}, \; \boldsymbol{\xi}}f \! \cdot \! f\bigg)\bigg(\sum\limits_{H_{\boldsymbol{\xi}}^+\cup H_{\boldsymbol{\xi}}^-, \; \boldsymbol{\xi}}f \! \cdot \! f\bigg)+\bigg(\sum\limits_{H_{\boldsymbol{\xi}}^+, \; \boldsymbol{\xi}}f \! \cdot \! f\bigg)\bigg(\sum\limits_{H_{\boldsymbol{\xi}}^-, \; \boldsymbol{\xi}}f \! \cdot \! f\bigg)\bigg\}.
    \end{split}
\end{equation*}
Similarly
\begin{equation*}
    \begin{split}
        \sum\limits_{\boldsymbol{\xi} \in \Gamma^3}\sum\limits_{H_{\boldsymbol{\xi}}, \; \boldsymbol{\xi}} f^2\!\cdot \! f^2=\sum\limits_{\boldsymbol{\xi} \in \Gamma^3} \sum\limits_{\mathcal{L}^\circ_{\boldsymbol{\xi}}, \; \boldsymbol{\xi}} f^2\!\cdot \! f^2 +\sum\limits_{\boldsymbol{\xi} \in \Gamma^3}\sum\limits_{H_{\boldsymbol{\xi}}^+\cup H_{\boldsymbol{\xi}}^-, \; \boldsymbol{\xi}}f^2\!\cdot \! f^2.
    \end{split}
\end{equation*}

\textit{Step 3: Application of Proposition  \ref{prop:mixedproductidentity}.} % We add and subtract to the left-hand side of the inequality a quantity on the right-hand side, which can be expressed as a sum over the products using Proposition \ref{prop:mixedproductidentity}.
This is an important part of our argument. We can apply the Proposition \ref{prop:mixedproductidentity} for the last term on the right-hand side of the inequality \eqref{eq:mainestimate}. For simplicity, we denote by $M_q=\frac{2q^4-5q^3-5q^2+5q+4}{(q-1)(q+1)^2}$ and the last term of \eqref{eq:mainestimate} can be replaced by
\begin{align*}
       \frac{M_q}{q-1} \sum_{\boldsymbol{\xi}\in \Gamma^3}&\bigg(\sum_{A_{\boldsymbol{\xi}}^+} f^2\bigg)\bigg(\sum_{A_{\boldsymbol{\xi}}^-} f^2\bigg)\\
\shortintertext{which we can write as}
\frac{M_q}{q-1} \sum_{\boldsymbol{\xi}\in \Gamma^3} \bigg\{\bigg(\sum_{\mathcal{L}_{\boldsymbol{\xi}}}f^2\bigg)^2+\bigg(\sum_{\mathcal{L}_{\boldsymbol{\xi}}}f^2\bigg)&\bigg(\sum_{H_{\boldsymbol{\xi}}^+\cup H_{{\boldsymbol{\xi}}^-}} f^2\bigg)+\bigg(\sum_{H_{\boldsymbol{\xi}}^+} f^2\bigg)\bigg(\sum_{H_{\boldsymbol{\xi}}^-} f^2\bigg)\bigg\}.
\end{align*}

\textit{Step 4: Separation of the sums over $H_{\boldsymbol{\xi}}^+$ and $H_{\boldsymbol{\xi}}^-$.} %Note that, so far, we have not applied any inequality; 
We have rewritten the expression on the left of \eqref{eq:mainestimate}.
In this rewritten version, there are only two expressions where sums over $H_{\boldsymbol{\xi}}^+$ and $H_{\boldsymbol{\xi}}^-$ are multiplied by each other. We want to replace such terms with terms where there are no mixed products, using the AM-GM inequality.
Grouping them, we have 
\begin{equation*}
    \begin{split}
    2\sum_{\boldsymbol{\xi}\in\Gamma^3}\bigg(\sum\limits_{H_{\boldsymbol{\xi}}^+, \; \boldsymbol{\xi}}f  \!\cdot \! f\bigg)\bigg(\sum\limits_{H_{\boldsymbol{\xi}}^-, \; \xi}&f\!\cdot \! f\bigg)-\frac{M_q}{q-1}\bigg(\sum_{H_{\boldsymbol{\xi}}^+} f^2\bigg)\bigg(\sum_{H_{\boldsymbol{\xi}}^-} f^2\bigg) \\
    & \stackrel{C-S}{\leq}\bigg(2-\frac{M_q}{q-1}\bigg)\sum_{\boldsymbol{\xi}\in\Gamma^3}\bigg(\sum\limits_{H_{\boldsymbol{\xi}}^+, \; \boldsymbol{\xi}}f\!\cdot \! f\bigg)\bigg(\sum\limits_{H_{\boldsymbol{\xi}}^-, \; \xi}f\!\cdot \! f\bigg)\\
    &\stackrel{AM-GM}{\leq}\bigg(1-\frac{M_q}{2(q-1)}\bigg)\sum_{\boldsymbol{\xi}\in\Gamma^3}\bigg\{\bigg(\sum\limits_{H_{\boldsymbol{\xi}}^+, \; \boldsymbol{\xi}}f\!\cdot \! f\bigg)^2+\bigg(\sum\limits_{H_{\boldsymbol{\xi}}^-, \;\boldsymbol{\xi}}f\!\cdot \! f\bigg)^2\bigg\}.
    \end{split}
\end{equation*}
It is crucial to note that $2-\frac{M_q}{q-1}=\frac{5q^3+q^2-5q-2}{(q^2-1)^2}\geq 0$ for all $q>2$. In the first inequality above, equality occurs if and only if $f(\boldsymbol{\xi}-\boldsymbol{\eta})=f(\boldsymbol{\eta})C^\pm(\boldsymbol{\xi})$ for all $\boldsymbol{\eta}\in H_{\boldsymbol{\xi}}^\pm, \, \forall \boldsymbol{\xi}\in \Gamma^3$. In the second inequality, equality occurs if and only if the factors 
\begin{equation*}
    \sum\limits_{H_{\boldsymbol{\xi}}^+, \; \boldsymbol{\xi}}f\!\cdot \! f\;\; \text{ and }\;\; \sum\limits_{H_{\boldsymbol{\xi}}^-, \; \boldsymbol{\xi}}f\!\cdot \! f
\end{equation*} are equal.

Putting all the terms obtained from the previous inequalities, we get
\begin{equation}\label{eq: upper bound for main estimate}
    \begin{split}
        LHS \text{ of }\eqref{eq:mainestimate}\leq &\sum\limits_{\boldsymbol{\xi} \in \Gamma^3}\bigg( \sum\limits_{\mathcal{L}^\circ_{\boldsymbol{\xi}}, \; \boldsymbol{\xi}} f \! \cdot \! f \bigg)^2+\bigg(2-\frac{M_q}{2(q-1)}\bigg)\sum_{\boldsymbol{\xi}\in\Gamma^3}\Bigg(\bigg( \sum\limits_{H_{\boldsymbol{\xi}}^+, \; \boldsymbol{\xi}}f \! \cdot \! f\bigg)^2+\bigg( \sum\limits_{H_{\boldsymbol{\xi}}^-, \; \boldsymbol{\xi}}f \! \cdot \! f\bigg)^2 \Bigg)\\
        &+2\sum_{\boldsymbol{\xi}\in\Gamma^3}\bigg(\sum\limits_{\mathcal{L}^\circ_{\boldsymbol{\xi}}, \; \boldsymbol{\xi}}f \! \cdot \! f \bigg)\bigg(\sum\limits_{H_{\boldsymbol{\xi}}^+\cup H_{\boldsymbol{\xi}}^-, \; \boldsymbol{\xi}}f \! \cdot \! f\bigg)\\
        &-q(q+1)\sum\limits_{\Gamma^3}f^4-q(q+1)\sum\limits_{\boldsymbol{\xi} \in \Gamma^3} \bigg\{\sum\limits_{ \mathcal{L}^\circ_{\boldsymbol{\xi}}, \; \boldsymbol{\xi}}f^2 \! \cdot \! f^2+\sum\limits_{H_{\boldsymbol{\xi}}^+\cup H_{\boldsymbol{\xi}}^-, \; \boldsymbol{\xi}}f^2 \! \cdot \! f^2\bigg\}\\
        &-\frac{M_q}{q-1} \sum_{\boldsymbol{\xi}\in \Gamma^3}\bigg\{ \bigg(\sum_{  \mathcal{L}_{\boldsymbol{\xi}}} f^2\bigg)^2+ \bigg(\sum_{  \mathcal{L}_{\boldsymbol{\xi}}} f^2\bigg)\bigg(\sum_{H_{\boldsymbol{\xi}}^+\cup H_{\boldsymbol{\xi}}^-} f^2\bigg)\bigg\}.
    \end{split}
\end{equation}
Hence, it suffices to prove that the quantity on the right of the above inequality is non-positive. Now, we will divide the expression into several parts in a meaningful way and show that each part we consider is non-positive.

\emph{Step 5: Regroup terms of the same plane.} We examine each plane $A_{\boldsymbol{\xi}}^\pm$ separately. It follows from Proposition \ref{prop: properties of planes} that there are exactly $q+1$ pairwise disjoint planes of the form $A_{\boldsymbol{\xi}}^+$; the same conclusion is true for $A_{\boldsymbol{\xi}}^-$. We enumerate them as $A_i^\pm$, for $1\leq i\leq q+1$. Thus, we have the following identity
\begin{equation*}
    \sum_{\boldsymbol{\xi}\in \Gamma^3} g(\boldsymbol{\xi})=\sum_{i=1}^{q+1}\sum_{\boldsymbol{\xi} \in A_i^+}g(\boldsymbol{\xi})=\sum_{i=1}^{q+1}\sum_{\boldsymbol{\xi} \in A_i^-}g(\boldsymbol{\xi}).
\end{equation*}

Now, we pick a plane $A$ among the planes $A_i^+$, and analyze the following expression
\begin{equation*}
    \begin{split}
        S(A)\coloneqq\sum_{\boldsymbol{\xi}\in A}\frac{1}{2}&\bigg( \sum\limits_{\mathcal{L}^\circ_{\boldsymbol{\xi}}, \; \boldsymbol{\xi}}f \! \cdot \! f\bigg)^2 +\bigg(2-\frac{M_q}{2(q-1)}\bigg)\sum_{\boldsymbol{\xi}\in A}\bigg( \sum\limits_{H_{\boldsymbol{\xi}}^+, \; \boldsymbol{\xi}}f \! \cdot \! f\bigg)^2\\
        &+2\sum_{\boldsymbol{\xi}\in A}\bigg(\sum\limits_{\mathcal{L}^\circ_{\boldsymbol{\xi}}, \; \boldsymbol{\xi}}f \! \cdot \! f\bigg)\bigg(\sum\limits_{H_{\boldsymbol{\xi}}^+, \; \boldsymbol{\xi}}f \! \cdot \! f\bigg)-\frac{q(q+1)}{2}\sum_{\boldsymbol{\xi}\in A}\sum\limits_{\mathcal{L}^\circ_{\boldsymbol{\xi}}, \; \boldsymbol{\xi}}f^2  \! \cdot \! f^2 \\
        &-q(q+1)\sum_{\boldsymbol{\xi}\in A}\sum\limits_{H_{\boldsymbol{\xi}}^+, \; \boldsymbol{\xi}}f^2  \! \cdot \! f^2-\frac{q(q+1)}{2}\sum_{A}f^4\\
        &-\frac{M_q}{2(q-1)} \sum_{\boldsymbol{\xi}\in A} \bigg(\sum_{\mathcal{L}_{\boldsymbol{\xi}}} f^2\bigg)^2-\frac{M_q}{q-1} \sum_{\boldsymbol{\xi}\in A} \bigg(\sum_{\mathcal{L}_{\boldsymbol{\xi}}} f^2\bigg)\bigg(\sum_{H_{\boldsymbol{\xi}}^+} f^2\bigg).
    \end{split}
\end{equation*}
Note that some of the terms in \eqref{eq: upper bound for main estimate} appear in two different places, which are $S(A_i^+)$ and $S(A_j^-)$ for appropriate $i, j$. Hence, we are taking such terms with the coefficient $1/2$ in the definition of $S(A)$.

To estimate the quantity $S(A)$, we use the following inequalities
\begin{align*}
    \bigg( \sum\limits_{ \mathcal{L}^\circ_{\boldsymbol{\xi}}, \; \boldsymbol{\xi}}f \! \cdot \! f\bigg)^2&\leq (q-2)\sum\limits_{ \mathcal{L}^\circ_{\boldsymbol{\xi}}, \; \boldsymbol{\xi}}f ^2 \! \cdot \! f^2,\\
    \bigg( \sum\limits_{H_{\boldsymbol{\xi}}^+, \; \boldsymbol{\xi}}f \! \cdot \! f\bigg)^2&\leq q(q-1)\sum\limits_{H_{\boldsymbol{\xi}}^+, \; \boldsymbol{\xi}}f ^2 \! \cdot \! f^2,\\
    \bigg(\sum\limits_{ \mathcal{L}^\circ_{\boldsymbol{\xi}}, \; \boldsymbol{\xi}}f \! \cdot \! f\bigg)\bigg(\sum\limits_{H_{\boldsymbol{\xi}}^+, \; \boldsymbol{\xi}}f \! \cdot \! f\bigg)&\leq \bigg(\sum\limits_{ \mathcal{L}^\circ_{\boldsymbol{\xi}}}f^2\bigg)\bigg(\sum\limits_{H_{\boldsymbol{\xi}}^+}f^2\bigg),
\end{align*}
where the first and the second follow from the Cauchy-Schwarz inequality, and the last one can be deduced from the AM-GM inequality. Therefore, the equality in these inequalities occurs if and only if the following conditions are satisfied, respectively:
\begin{enumerate}[label=\textnormal{(}\it{\roman*}\textnormal{)}]
    \itemsep-0.7em 
    \item $f(\boldsymbol{\eta})=C_1(\boldsymbol{\xi})$ for all $\boldsymbol{\eta}\in \mathcal{L}^\circ_{\boldsymbol{\xi}}$;
    \item $f(\boldsymbol{\eta})=C_2(\boldsymbol{\xi})$ for all $\boldsymbol{\eta}\in H^+_{\boldsymbol{\xi}}$;
    \item $f(\boldsymbol{\eta})=f(\boldsymbol{\xi}-\boldsymbol{\eta})$ for all $\boldsymbol{\eta}\in \mathcal{L}^\circ_{\boldsymbol{\xi}}\cup H^+_{\boldsymbol{\xi}}$.
\end{enumerate}
Thus, we have the following
\begin{equation*}
    \begin{split}
        S(A)\leq\frac{q-2}{2}&\sum_{\boldsymbol{\xi}\in A} \sum\limits_{\mathcal{L}^\circ_{\boldsymbol{\xi}}, \; \boldsymbol{\xi}}f^2  \! \cdot \! f^2+\bigg(2-\frac{M_q}{2(q-1)}\bigg)q(q-1)\sum_{\boldsymbol{\xi}\in A}\sum\limits_{ H_{\boldsymbol{\xi}}^+, \; \boldsymbol{\xi}}f^2  \! \cdot \! f^2\\
        &+2\sum_{\boldsymbol{\xi}\in A}\bigg(\sum\limits_{\mathcal{L}^\circ_{\boldsymbol{\xi}}}f^2\bigg)\bigg(\sum\limits_{ H_{\boldsymbol{\xi}}^+}f^2\bigg)-\frac{q(q+1)}{2}\sum_{\boldsymbol{\xi}\in A}\sum\limits_{\mathcal{L}^\circ_{\boldsymbol{\xi}}, \; \boldsymbol{\xi}}f^2  \! \cdot \! f^2 \\
        &-q(q+1)\sum_{\boldsymbol{\xi}\in A}\sum\limits_{ H_{\boldsymbol{\xi}}^+, \; \boldsymbol{\xi}}f^2  \! \cdot \! f^2-\frac{q(q+1)}{2}\sum_{A}f^4\\
        &-\frac{M_q}{2(q-1)} \sum_{\boldsymbol{\xi}\in A} \bigg(\sum_{\mathcal{L}_{\boldsymbol{\xi}}} f^2\bigg)^2-\frac{M_q}{q-1} \sum_{\boldsymbol{\xi}\in A} \bigg(\sum_{\mathcal{L}_{\boldsymbol{\xi}}} f^2\bigg)\bigg(\sum_{ H_{\boldsymbol{\xi}}^+} f^2\bigg).
    \end{split}
\end{equation*}
At this moment, we are able to apply Fubini's theorem for the iterated sums and simplify the expression on the right. Using the facts from Proposition \ref{prop: properties of planes}, one can see that the following identities hold:
\begin{align*}
    \sum_{\boldsymbol{\xi}\in A}\sum\limits_{ \mathcal{L}^\circ_{\boldsymbol{\xi}}, \; \boldsymbol{\xi}}f^2  \! \cdot \! f^2&= \sum_{\boldsymbol{\xi} \in A} f(\boldsymbol{\xi})^2 \sum\limits_{\mathcal{L}^\circ_{\boldsymbol{\xi}}} f^2=\frac{1}{q-1}\sum_{\boldsymbol{\xi} \in A}\bigg(\sum\limits_{\mathcal{L}_{\boldsymbol{\xi}}} f^2\bigg)^2-\sum_{A} f^4,\\
    \sum_{\boldsymbol{\xi}\in A}\sum\limits_{H_{\boldsymbol{\xi}}^+, \; \boldsymbol{\xi}}f^2  \! \cdot \! f^2&=\sum_{\boldsymbol{\xi}\in A} f(\boldsymbol{\xi})^2\sum\limits_{H_{\boldsymbol{\xi}}^+}f^2 =\frac{1}{q-1} \sum_{\boldsymbol{\xi}\in A} \bigg(\sum_{\mathcal{L}_{\boldsymbol{\xi}}} f^2\bigg)\bigg(\sum_{H_{\boldsymbol{\xi}}^+} f^2\bigg),\\
    \sum_{\boldsymbol{\xi}\in A}\bigg(\sum\limits_{\mathcal{L}^\circ_{\boldsymbol{\xi}}}f^2\bigg)\bigg(\sum\limits_{H_{\boldsymbol{\xi}}^+}f^2\bigg)&=\sum_{\boldsymbol{\xi}\in A}\bigg(\sum\limits_{\mathcal{L}_{\boldsymbol{\xi}}}f^2\bigg)\bigg(\sum\limits_{H_{\boldsymbol{\xi}}^+}f^2\bigg)-\sum_{\boldsymbol{\xi}\in A}f(\boldsymbol{\xi})^2\bigg(\sum\limits_{H_{\boldsymbol{\xi}}^+}f^2\bigg)\\
    &=\frac{q-2}{q-1}\sum_{\boldsymbol{\xi}\in A}\bigg(\sum\limits_{\mathcal{L}_{\boldsymbol{\xi}}}f^2\bigg)\bigg(\sum\limits_{H_{\boldsymbol{\xi}}^+}f^2\bigg).
\end{align*}
From these identities, it follows that 
\begin{align*}
        S(A)\leq &\Bigg(\bigg(\frac{q-2}{2}-\frac{q(q+1)}{2}\bigg)\frac{1}{q-1}-\frac{M_q}{2(q-1)}\Bigg)\sum_{\boldsymbol{\xi} \in A}\bigg(\sum\limits_{\mathcal{L}_{\boldsymbol{\xi}}} f^2\bigg)^2\\
        &+\bigg(2q-\frac{q M_q}{2(q-1)}-\frac{q(q+1)}{q-1}\bigg)\sum_{\boldsymbol{\xi}\in A} \bigg(\sum_{\mathcal{L}_{\boldsymbol{\xi}}} f^2\bigg)\bigg(\sum_{H_{\boldsymbol{\xi}}^+} f^2\bigg)\\
        &+\bigg(\frac{2(q-2)}{q-1}-\frac{M_q}{q-1}\bigg)\sum_{\boldsymbol{\xi}\in A}\bigg(\sum\limits_{\mathcal{L}_{\boldsymbol{\xi}}}f^2\bigg)\bigg(\sum\limits_{H_{\boldsymbol{\xi}}^+}f^2\bigg)\\
        &+\bigg(-\frac{q-2}{2}+\frac{q(q+1)}{2}-\frac{q(q+1)}{2}\bigg)\sum_{A} f^4\\
        =&-\frac{q^5+3 q^4-4 q^3-4 q^2+3 q+2}{2 \left(q^2-1\right)^2}\sum_{\boldsymbol{\xi} \in A}\bigg(\sum\limits_{\mathcal{L}_{\boldsymbol{\xi}}} f^2\bigg)^2\\
        &+\frac{q \left(q^3+3 q^2-3 q-4\right)}{2 \left(q^2-1\right)^2}\sum_{\boldsymbol{\xi}\in A}\bigg(\sum\limits_{\mathcal{L}_{\boldsymbol{\xi}}}f^2\bigg)\bigg(\sum\limits_{H_{\boldsymbol{\xi}}^+}f^2\bigg)- \frac{q-2}{2}\sum_{A} f^4.
\end{align*}
Finally, we need one more identity to make the expression simpler
\begin{equation*}
    \begin{split}
        \sum_{\boldsymbol{\xi}\in A}\bigg(\sum\limits_{\mathcal{L}_{\boldsymbol{\xi}}}f^2\bigg)\bigg(\sum\limits_{H_{\boldsymbol{\xi}}^+}f^2\bigg)=&\sum_{\boldsymbol{\xi}\in A}\bigg(\sum\limits_{\mathcal{L}_{\boldsymbol{\xi}}}f^2\bigg)\bigg(\sum\limits_{A}f^2\bigg)-\sum_{\boldsymbol{\xi}\in A}\bigg(\sum\limits_{\mathcal{L}_{\boldsymbol{\xi}}}f^2\bigg)^2\\
        &=(q-1)\bigg(\sum\limits_{A}f^2\bigg)^2-\sum_{\boldsymbol{\xi}\in A}\bigg(\sum\limits_{\mathcal{L}_{\boldsymbol{\xi}}}f^2\bigg)^2.
    \end{split}
\end{equation*}
Hence, the above estimate simplifies to the following
\begin{equation*}
    \begin{split}
        S(A)\leq &-\frac{q^4+3 q^3-4 q^2-3 q+2}{2 (q-1)^2 (q+1)}\sum_{\boldsymbol{\xi}\in A}\bigg(\sum\limits_{\mathcal{L}_{\boldsymbol{\xi}}}f^2\bigg)^2-\frac{q-2}{2}\sum_{A} f^4\\
        &+\frac{q \left(q^3+3 q^2-3 q-4\right)}{2 (q-1)\left(q+1\right)^2}\bigg(\sum\limits_{A}f^2\bigg)^2.
    \end{split}
\end{equation*}
Note the following implication of the Cauchy-Schwarz inequality
\begin{equation*}
\sum_{A} f^4\geq \frac{1}{q^2-1}  \bigg(\sum\limits_{A}f^2\bigg)^2, 
\end{equation*}
where the equality happens if and only if $f$ is a constant function on $A$.

Thus, it follows that 
\begin{equation*}
\begin{split}
    S(A)\leq& \frac{q^4+3 q^3-4 q^2-3 q+2}{2 (q+1)^2 (q-1)}\bigg(\sum\limits_{A}f^2\bigg)^2-\frac{q^4+3 q^3-4 q^2-3 q+2}{2 (q-1)^2 (q+1)}\sum_{\boldsymbol{\xi}\in A}\bigg(\sum\limits_{\mathcal{L}_{\boldsymbol{\xi}}}f^2\bigg)^2\\
    &=\frac{q^4+3 q^3-4 q^2-3 q+2}{2 (q+1)^2 (q-1)}\Bigg(\bigg(\sum\limits_{A}f^2\bigg)^2-\frac{q+1}{q-1}\sum_{\boldsymbol{\xi}\in A}\bigg(\sum\limits_{\mathcal{L}_{\boldsymbol{\xi}}}f^2\bigg)^2\Bigg).
\end{split}
\end{equation*}
We know that the first factor is a non-negative number for all $q>2$. Moreover, notice that 
\begin{equation*}
\begin{split}
    \bigg(\sum_{A} f^2\bigg)^2=\bigg(\frac{1}{q-1}\sum_{\boldsymbol{\xi}\in A}\bigg(\sum_{\mathcal{L}_{\boldsymbol{\xi}}} f^2\bigg)\bigg)^2\stackrel{C-S}{\leq}\frac{q^2-1}{(q-1)^2}\sum_{\boldsymbol{\xi}\in A}\bigg(\sum_{\mathcal{L}_{\boldsymbol{\xi}}} f^2\bigg)^2\\
    =\frac{q+1}{q-1}\sum_{\boldsymbol{\xi}\in A}\bigg(\sum_{\mathcal{L}_{\boldsymbol{\xi}}} f^2\bigg)^2.
    \end{split}
\end{equation*}
Therefore, we conclude that $S(A)\leq 0$. The equality happens if and only if all the inequalities above are equalities, which occurs if and only if $f$ is a constant function on $A$.
To complete the proof, we note that the choice of $A$ was arbitrary; we can argue for any plane $A_i^\pm$ in a similar way. Combining all such inequalities, we deduce that \eqref{eq:mainestimate} holds. Equality in \eqref{eq:mainestimate} is attained if and only if $f$ is constant on each of the planes $A_i^\pm$. By Proposition \ref{prop: properties of planes}, we know $A_i^+$ and $A_j^-$ have a non-trivial intersection; therefore, the constants are the same. Varying $i$ and $ j$, we see that $f$ has to be a constant function on $\Gamma^3$.

\section{Classification of the extremizers: Proof of Theorem \ref{thm:classification}}\label{sec:classification}

In this section, we will give a complete classification of the functions $f$ on $\Gamma^3$ for which the inequality \eqref{eq:combinatorialestimate} becomes equality. We assume henceforth that the extremizers are not constantly zero. We begin with the following observation. 

\begin{lemma}
    Let $f\not \equiv 0$ be a function on $\Gamma^3$ for which \eqref{eq:combinatorialestimate} becomes equality. Then the following must hold:
    \begin{enumerate}[label=\textnormal{(}\it{\roman*}\textnormal{)}]
        \item $\lvert f\rvert$ is a constant function;
        \item If $\varphi(\boldsymbol{\xi})\coloneqq f(\boldsymbol{\xi})/\lvert f(\boldsymbol{\xi})\rvert$, then 
        \begin{equation}\label{eq:functionalequation}
            \varphi(\boldsymbol{x})\varphi(\boldsymbol{y})=\varphi(\boldsymbol{z})\varphi(\boldsymbol{w}),
        \end{equation}
        for all $\boldsymbol{x}, \boldsymbol{y}, \boldsymbol{z}, \boldsymbol{w} \in \Gamma^3$ such that $\boldsymbol{x}+\boldsymbol{y}=\boldsymbol{z}+\boldsymbol{w}$.
    \end{enumerate}
\end{lemma}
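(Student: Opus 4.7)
The strategy is to extract the equality conditions from the chain of inequalities used in the proof of Theorem \ref{thm:sharprestriction}. Most of those inequalities (the mass-transport Cauchy--Schwarz in Step 2, the AM-GMs in Step 4, and all of the Cauchy--Schwarz and AM-GM bounds in Step 5) involve only $|f|$; they are automatically saturated once $|f|$ is shown to be constant. The genuine content therefore lies in the two inequalities of Step 1 of Section \ref{sec:proof of thm1}, which I will unpack carefully.

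For (i), recall that Step 1 chains together the pointwise bound $\Re(w)\leq |w|$, the triangle inequality $|a\bar{b}+\bar{c}d|\leq |a||b|+|c||d|$, and the Cauchy--Schwarz bound $|a||b|+|c||d|\leq \sqrt{(|a|^2+|c|^2)(|b|^2+|d|^2)}=2f_\sharp(\boldsymbol{\eta}_1)f_\sharp(\boldsymbol{\eta}_2)$, where $(a,b,c,d)=(f(\boldsymbol{\eta}_1),f(-\boldsymbol{\eta}_2),f(-\boldsymbol{\eta}_1),f(\boldsymbol{\eta}_2))$. Saturating the last of these forces $|f(\boldsymbol{\eta}_1)||f(\boldsymbol{\eta}_2)|=|f(-\boldsymbol{\eta}_1)||f(-\boldsymbol{\eta}_2)|$ for every $\boldsymbol{\eta}_1,\boldsymbol{\eta}_2\in\Gamma^3$; specialising $\boldsymbol{\eta}_1=\boldsymbol{\eta}_2$ gives $|f(\boldsymbol{\eta})|=|f(-\boldsymbol{\eta})|$, so $f_\sharp=|f|$. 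Since the proof of Theorem \ref{thm:sharprestriction} concludes that the symmetrized function must be constant on $\Gamma^3$, this immediately proves (i).

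For (ii), write $f=c\varphi$ with $|\varphi|\equiv 1$. Saturating the triangle-inequality step requires $f(\boldsymbol{\eta}_1)\overline{f(-\boldsymbol{\eta}_2)}$ and $\overline{f(-\boldsymbol{\eta}_1)}f(\boldsymbol{\eta}_2)$ to share an argument; dividing out $c^2$ reduces this to $\psi(\boldsymbol{\eta}_1)=\psi(\boldsymbol{\eta}_2)$, where $\psi(\boldsymbol{\eta}):=\varphi(\boldsymbol{\eta})\varphi(-\boldsymbol{\eta})$. Hence $\psi\equiv K$ for some unimodular $K\in\mathbb{C}$, equivalently $\varphi(-\boldsymbol{\eta})=K\,\overline{\varphi(\boldsymbol{\eta})}$. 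Next, saturating $\Re(w)\leq |w|$ pointwise forces the integrand in the Step 1 display to be a non-negative real on every admissible quadruple. Substituting $f=c\varphi$ together with the relation $\varphi(-\boldsymbol{\eta})=K\,\overline{\varphi(\boldsymbol{\eta})}$ collapses that integrand to $c^4\,\overline{K}^{\,2}\,\varphi(\boldsymbol{\eta}_1)\varphi(\boldsymbol{\eta}_2)\varphi(\boldsymbol{\eta}_3)\varphi(\boldsymbol{\eta}_4)$; since this quantity has modulus $c^4$ and is non-negative real, it must equal $c^4$. Therefore
\[
\varphi(\boldsymbol{\eta}_1)\varphi(\boldsymbol{\eta}_2)\varphi(\boldsymbol{\eta}_3)\varphi(\boldsymbol{\eta}_4)=K^2 \quad\text{whenever } \boldsymbol{\eta}_1+\boldsymbol{\eta}_2+\boldsymbol{\eta}_3+\boldsymbol{\eta}_4=\boldsymbol{0}\text{ and }\boldsymbol{\eta}_i\in\Gamma^3.
\]

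Finally, given $\boldsymbol{x}+\boldsymbol{y}=\boldsymbol{z}+\boldsymbol{w}$ with all four points in $\Gamma^3$, the symmetry $-\Gamma^3=\Gamma^3$ lets me apply the displayed identity to the admissible quadruple $(\boldsymbol{x},\boldsymbol{y},-\boldsymbol{z},-\boldsymbol{w})$; substituting $\varphi(-\boldsymbol{z})=K\,\overline{\varphi(\boldsymbol{z})}$ and $\varphi(-\boldsymbol{w})=K\,\overline{\varphi(\boldsymbol{w})}$ immediately yields $\varphi(\boldsymbol{x})\varphi(\boldsymbol{y})=\varphi(\boldsymbol{z})\varphi(\boldsymbol{w})$, which is \eqref{eq:functionalequation}. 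The main obstacle is bookkeeping: one must carefully separate which of the three sub-inequalities bundled into Step 1 is responsible for each piece of structure (the $|f|$-evenness, the $\psi$-constancy, and the four-phase identity), since the paper packages them into a two-line display.
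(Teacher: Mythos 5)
Your proposal is correct and follows essentially the same route as the paper: it extracts the constancy of the symmetrized function $f_\sharp$ from the equality analysis of Theorem \ref{thm:sharprestriction}, and then reads off the pointwise saturation conditions of the symmetrization step (Step 1) to obtain first that $\lvert f\rvert$ is constant and then the phase identity. Your bookkeeping differs only cosmetically (splitting the complex Cauchy--Schwarz into triangle inequality plus real Cauchy--Schwarz, and routing the argument through $\psi(\boldsymbol{\eta})=\varphi(\boldsymbol{\eta})\varphi(-\boldsymbol{\eta})\equiv K$ and the four-phase identity $\varphi(\boldsymbol{\eta}_1)\varphi(\boldsymbol{\eta}_2)\varphi(\boldsymbol{\eta}_3)\varphi(\boldsymbol{\eta}_4)=K^2$), which matches the paper's use of the proportionality constants $C(\boldsymbol{\eta}_1,\boldsymbol{\eta}_2)$ and argument equalities.
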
 
\begin{proof}
    Since the estimate \eqref{eq:combinatorialestimate} becomes an equality, it must be true that the inequality \eqref{eq:multilinear form inequality} also becomes an equality, and the symmetrization $f_\sharp$ of the function $f$ has to be a constant function on $\Gamma^3$.
    Thus, we got the necessary condition on the magnitude of the extremizers. i.e. $\lvert f(\boldsymbol{\eta})\rvert^2+\lvert f(-\boldsymbol{\eta})\rvert^2=const\neq 0$ for all $\boldsymbol{\eta}\in \Gamma^3$. 
    % For the phase function $\varphi$ of the extremizer $f$, we need to understand the equality case of the inequality \eqref{eq:multilinear form inequality}. 
 Combining the conditions for the equality of \eqref{eq:multilinear form inequality}  and the fact that $f_\sharp=const$, we deduce that we must have the following identities, 
 \begin{equation*}\label{eq:collinearity condition}
 \begin{split}
     f(\boldsymbol{\eta}_1)&=C(\boldsymbol{\eta}_1, \boldsymbol{\eta}_2){f(-\boldsymbol{\eta}_2)};\\
     \overline{f(-\boldsymbol{\eta}_1)}&=C(\boldsymbol{\eta}_1, \boldsymbol{\eta}_2)\overline{f(\boldsymbol{\eta}_2)},
     % 1&=\lvert C(\eta_1, \eta_2)\rvert,
\end{split}
 \end{equation*}
 for all $\boldsymbol{\eta}_1, \boldsymbol{\eta}_2 \in\Gamma^3$ and for some number $C(\boldsymbol{\eta}_1, \boldsymbol{\eta}_2)\in\mathbb{C}$. Looking at the sum of the squares of these two equations, we deduce that $\lvert C(\boldsymbol{\eta}_1, \boldsymbol{\eta}_2)\rvert=1$, therefore, $|f|$ is constant.  Using this fact, we conclude that $f(\boldsymbol{\rho})\neq 0$ for all $\boldsymbol{\rho}\in \Gamma^3$; otherwise we see $f\equiv0$. Therefore, extremizers never vanish and we define the phase function $\varphi:\Gamma^3\to\mathbb{S}^1$ as $\varphi(\boldsymbol{\rho})\coloneqq f(\boldsymbol{\rho})/\lvert f(\boldsymbol{\rho})\rvert$.
 
 The other condition for the equality in \eqref{eq:multilinear form inequality} is the following 
 \begin{equation}\label{eq:non-negative condition}
   C(\boldsymbol{\eta}_1, \boldsymbol{\eta}_2)f_\sharp(-\boldsymbol{\eta}_2) f(\boldsymbol{\eta}_3)\overline{f(-\boldsymbol{\eta}_4)}\geq 0,
 \end{equation}
for all $\boldsymbol{\eta}_1+\boldsymbol{\eta}_2+\boldsymbol{\eta}_3+\boldsymbol{\eta}_4=0, \; \boldsymbol{\eta}_i\in \Gamma^3$. First of all, we observe that each of the factors in the expression above is non-zero by the previous deductions; therefore, the inequality must be strict. 
% Note that, this condition is trivial if $\boldsymbol{\eta}_1+\boldsymbol{\eta}_2=0$.
% It follows from Proposition \ref{prop:cardinality of E_x} that for any pair of points $\eta_1, \eta_2$ in $\Gamma^3$ we can find another pair of points $\eta_3, \eta_4$ also in $\Gamma^3$ such that $\eta_1+\eta_2+\eta_3+\eta_4=0$. Note that these conditions imply $\lvert f\rvert$ is a constant function. Using this fact, we conclude that $f(x)\neq 0$ for all $x\in \Gamma^3$; otherwise we see $f\equiv0$. Therefore, extremizers never vanish. Hence, we are free to define the phase function $\varphi:\Gamma^3\to\mathbb{S}^1$ as $\varphi(x)\coloneqq f(x)/\lvert f(x)\rvert$.
% Clearly, the first condition tells us $C(\eta_1, \eta_2)=C^\prime(\eta_1+\eta_2)$ which depends only on $\eta_1+\eta_2$. 
Hence, the condition in the equation \eqref{eq:non-negative condition} is equivalent to 
\begin{equation*}
    \operatorname{Arg}\bigg(C(\boldsymbol{\eta}_1, \boldsymbol{\eta}_2)f(\boldsymbol{\eta}_3)\overline{f(-\boldsymbol{\eta}_4)}\bigg)=0 \pmod{ 2\pi},
\end{equation*}
for all $\boldsymbol{\eta}_1+\boldsymbol{\eta}_2+\boldsymbol{\eta}_3+\boldsymbol{\eta}_4=0, \; \boldsymbol{\eta}_i\in \Gamma^3$. 
Clearly, by varying the elements $\boldsymbol{\eta}_3$ and $\boldsymbol{\eta}_4$ in this condition we conclude that 
\begin{equation*}\label{eq:equality of arguments}
    \operatorname{Arg}\bigg(f(\boldsymbol{\eta}_3)\overline{f(-\boldsymbol{\eta}_4)}\bigg)= \operatorname{Arg}\bigg(f(\boldsymbol{\eta}_3^\prime)\overline{f(-\boldsymbol{\eta}_4^\prime)}\bigg) \pmod{2\pi},
\end{equation*}
for all $\boldsymbol{\eta}_3+\boldsymbol{\eta}_4=\boldsymbol{\eta}_3^\prime+\boldsymbol{\eta}_4^\prime\neq 0$. 
% To prove $(i)$ we argue by contradiction. Suppose $f(x)=0$ for some $x\in \Gamma^3$, then using the identity \eqref{eq:equality of arguments} and the fact that $f(-x)\neq 0$ we can deduce 
% \begin{equation*}
%     f(-x)\overline{f(-y)}=f(x)\overline{f(2x-y)}=0, 
% \end{equation*}
% for all $y\in H_{\eta_3}\setminus\{2\eta_3, \eta_3\}$. This leads to a conclusion that $f(y)=0$ for all $y\in H_{\eta_3}\setminus\{2\eta_3, \eta_3\}$, which contains at least one line. Thus it contradicts to the fact that $\lvert f(\eta)\rvert^2+\lvert f(-\eta)\rvert^2\neq 0$. 
If we write this equation in terms of the phase function, we get
\begin{equation*}
    \frac{\varphi(\boldsymbol{\eta}_3)}{\varphi(-\boldsymbol{\eta}_4)}= \frac{\varphi(\boldsymbol{\eta}^\prime_3)}{\varphi(-\boldsymbol{\eta}^\prime_4)}.
\end{equation*}
After we shuffle the terms and rename the variables, we arrive at the functional equation \eqref{eq:functionalequation} for $\varphi$.
\end{proof}

% In particular, $\varphi(x)\varphi(-x)=const$, for all $x\in \Gamma^3$. Hence, we can assume WLOG that $\varphi(x)\varphi(-x)=1$, (otherwise, set $\varphi_1(x)=e^{i\theta}\varphi(x)$).

In the rest of this section, we study the solutions to the functional equation \eqref{eq:functionalequation}.

\begin{proposition}\label{prop:classificationonplanes}
    Let $p$ be an odd prime and $q=p^n$. If a map $\psi:\mathbb{F}_q^2\setminus\{\boldsymbol{0}\}\to \mathbb{C}^\times$ satisfies
    \begin{equation}\label{eq:functionalequationonplane}
        \psi(\boldsymbol{x})\psi(\boldsymbol{y})=\psi(\boldsymbol{z})\psi(\boldsymbol{w})
    \end{equation}
    for all non-zero $\boldsymbol{x}, \boldsymbol{y}, \boldsymbol{z}$, and $\boldsymbol{w}$ such that $\boldsymbol{x}+\boldsymbol{y}=\boldsymbol{z}+\boldsymbol{w}$, then it must be a multiple of a character. More precisely,  there exists $\lambda\in\mathbb{C}^\times$ and $(a_1, a_2)\in \mathbb{F}_q^2$ such that 
    \begin{equation}\label{eq:solutionontheplane}
        \psi(x_1, x_2)=\lambda \cdot\exp{\left(\frac{2 \pi i}{p}\operatorname{Tr}_n(a_1x_1+a_2x_2)\right)},
    \end{equation}
    for all $(x_1, x_2)\in \mathbb{F}_q^2\setminus\{\boldsymbol{0}\}$.
\end{proposition}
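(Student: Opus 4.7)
The plan is to leverage the two-variable functional equation \eqref{eq:functionalequationonplane} to construct a genuine additive character of $(\mathbb{F}_q^2,+)$, and then to show $\psi$ itself is a constant multiple of it. Since $\psi(x)\psi(y)$ depends only on $x+y$ whenever $x,y\neq 0$, I would first define $F:\mathbb{F}_q^2\to\mathbb{C}^\times$ by $F(s):=\psi(x)\psi(s-x)$ for any $x\in\mathbb{F}_q^2\setminus\{0,s\}$; such an $x$ exists since $q^2\geq 9$, and well-definedness follows from \eqref{eq:functionalequationonplane}. In particular, $F(0)$ equals the common value of $\psi(x)\psi(-x)$, and is nonzero.

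The heart of the argument is a Cauchy-type identity for $F$:
\[
F(s)\,F(t)=F(0)\,F(s+t) \qquad \text{for all } s,t\in\mathbb{F}_q^2.
\]
To prove it, I would fix $x_1\in\mathbb{F}_q^2\setminus\{0,s\}$ and $x_2\in\mathbb{F}_q^2\setminus\{0,t,-s\}$, write
\[
F(s)F(t)=\psi(x_1)\,\psi(s-x_1)\,\psi(x_2)\,\psi(t-x_2),
\]
and apply \eqref{eq:functionalequationonplane} to the middle pair via the decomposition $(s-x_1)+x_2=(s+x_2)+(-x_1)$; both summands on the right are nonzero by the choice of $x_1,x_2$. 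This yields $\psi(s-x_1)\psi(x_2)=\psi(s+x_2)\psi(-x_1)$, and substituting back regroups the product as
\[
F(s)F(t)=\bigl[\psi(x_1)\psi(-x_1)\bigr]\cdot\bigl[\psi(t-x_2)\psi(s+x_2)\bigr]=F(0)\,F(s+t).
\]

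Setting $G:=F/F(0)$, we then have a homomorphism $G:(\mathbb{F}_q^2,+)\to\mathbb{C}^\times$, i.e.\ an additive character, and the classification of additive characters of $\mathbb{F}_q^2$ gives
\[
G(x_1,x_2)=\exp\!\left(\tfrac{2\pi i}{p}\operatorname{Tr}_n(a_1x_1+a_2x_2)\right)
\]
for some $(a_1,a_2)\in\mathbb{F}_q^2$. To finish, pick $\lambda\in\mathbb{C}^\times$ with $\lambda^2=F(0)$. Since $p$ is odd, for every $x\neq 0$ we have $2x\neq 0$ and $\psi(x)^2=F(2x)=\lambda^2 G(2x)=\lambda^2 G(x)^2$, hence $\psi(x)=\epsilon(x)\,\lambda\, G(x)$ with $\epsilon(x)\in\{\pm 1\}$. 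Substituting this back into $\psi(x)\psi(y)=F(x+y)=\lambda^2 G(x)G(y)$ forces $\epsilon(x)\epsilon(y)=1$ for all nonzero $x,y$, so $\epsilon$ is a global constant which can be absorbed into $\lambda$, yielding \eqref{eq:solutionontheplane}.

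The only delicate point is ensuring that the auxiliary elements $x_1,x_2$ in the Cauchy-type identity can always be chosen to avoid the handful of forbidden zero-producing values; the bound $q^2\geq 9$ makes this trivial, so no case distinction is really needed. Everything else is straightforward bookkeeping once one spots the right regrouping $(s-x_1)+x_2=(s+x_2)+(-x_1)$, which is the single nontrivial idea in the argument.
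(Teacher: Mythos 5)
Your argument is correct, but it follows a genuinely different route from the paper's. The paper first normalizes so that $\psi(\boldsymbol{x})\psi(-\boldsymbol{x})=1$, then proves $\psi(\boldsymbol{x})^{2p}=1$ (with a separate argument for $p=3$), shows $\psi(2\boldsymbol{x})=\pm\psi(\boldsymbol{x})^2$ with a sign that is uniform in $\boldsymbol{x}$, and only then extends $\psi$ to $\boldsymbol{0}$ by $\Psi(\boldsymbol{0})=1$ to obtain an additive character of $(\mathbb{F}_q^2,+)$. You bypass all of that root-of-unity and sign-uniformity bookkeeping by working not with $\psi$ itself but with the induced sum function $F(\boldsymbol{s})=\psi(\boldsymbol{x})\psi(\boldsymbol{s}-\boldsymbol{x})$, which is automatically defined at $\boldsymbol{s}=\boldsymbol{0}$; your key lemma is the identity $F(\boldsymbol{s})F(\boldsymbol{t})=F(\boldsymbol{0})F(\boldsymbol{s}+\boldsymbol{t})$, obtained from the single regrouping $(\boldsymbol{s}-\boldsymbol{x}_1)+\boldsymbol{x}_2=(\boldsymbol{s}+\boldsymbol{x}_2)+(-\boldsymbol{x}_1)$, after which $G=F/F(\boldsymbol{0})$ is a character and $\psi$ is recovered from $\psi(\boldsymbol{x})^2=F(2\boldsymbol{x})$ (here oddness of $p$ enters, exactly as in the paper), with the residual sign $\epsilon(\boldsymbol{x})\in\{\pm1\}$ shown to be a global constant by substituting back into $\psi(\boldsymbol{x})\psi(\boldsymbol{y})=F(\boldsymbol{x}+\boldsymbol{y})$ and absorbed into $\lambda$. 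The choices of auxiliary points $\boldsymbol{x}_1\notin\{\boldsymbol{0},\boldsymbol{s}\}$ and $\boldsymbol{x}_2\notin\{\boldsymbol{0},\boldsymbol{t},-\boldsymbol{s}\}$ are indeed harmless since $q^2\geq 9$, and your verification that all four arguments in each application of \eqref{eq:functionalequationonplane} are nonzero is the only place where care is needed. What your approach buys is a cleaner, case-free argument (no $p=3$ exception, no $2p$-th roots of unity); what the paper's approach buys is that it manipulates $\psi$ directly and makes the sign obstruction $\psi(2\boldsymbol{x})=\pm\psi(\boldsymbol{x})^2$ explicit, which is conceptually close to how the phase information is later used on the cone. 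Both proofs ultimately invoke the same classification of additive characters of $\mathbb{F}_q^2$.
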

\begin{proof}
    From \eqref{eq:functionalequationonplane} it follows that 
    $\psi(\boldsymbol{x})\psi(-\boldsymbol{x})=const$, for all $\boldsymbol{x}\neq\boldsymbol{0}$. Since every non-zero scalar multiple of $\psi$ satisfies the functional equation \eqref{eq:functionalequationonplane}, we may assume (up to a constant factor) that 
    \begin{equation}\label{eq:normalizationcondition}
        \psi(\boldsymbol{x})\psi(-\boldsymbol{x})=1, \quad \text{ for all } \boldsymbol{x}\neq\boldsymbol{0}.
    \end{equation} Under these conditions, we still have two choices for the sign of $\psi$; thus, we can try to identify the solutions up to a factor of $\pm 1$.
    
    \emph{Step 1:} $\psi(\boldsymbol{x})^{2p}=1$, for all $\boldsymbol{x}\neq\boldsymbol{0}$.

Fix $\boldsymbol{x}\neq\boldsymbol{0}$. Since there are at least $3$ elements in $\mathbb{F}_q^2\setminus\{\boldsymbol{0}\}$ one finds $\boldsymbol{y}\in \mathbb{F}_q^2\setminus\{\boldsymbol{0}\}$ such that $\boldsymbol{y}\neq\pm \boldsymbol{x}$. Therefore, using  \eqref{eq:normalizationcondition} and \eqref{eq:functionalequationonplane} consecutively, we deduce
\begin{equation}\label{eq:quadraticrelation}
    \psi(\boldsymbol{x})^2=\psi(\boldsymbol{x})\psi(\boldsymbol{y})\psi(-\boldsymbol{y})\psi(\boldsymbol{x})=\psi\left(\frac{\boldsymbol{x}+\boldsymbol{y}}{2}\right)^2\psi\left(\frac{\boldsymbol{x}-\boldsymbol{y}}{2}\right)^2=\psi\left(\frac{\boldsymbol{x}}{2}\right)^4.
\end{equation}
If we assume $p\neq3$, again using \eqref{eq:normalizationcondition} we can obtain the relation 
\begin{equation*}
    \psi(3\boldsymbol{x})=\psi(3\boldsymbol{x})\psi(-\boldsymbol{x})\psi(\boldsymbol{x})=\psi(\boldsymbol{x})^3.
\end{equation*}
Repeating similar argument several times we show that $\psi((p-2)\boldsymbol{x})=\psi(\boldsymbol{x})^{p-2}$, which is also equivalent to the identity $\psi(2\boldsymbol{x})=\psi(\boldsymbol{x})^{2-p}$. Combining the last identity with \eqref{eq:quadraticrelation} we get 
\begin{equation*}
\psi(\boldsymbol{x})^4=\psi(2\boldsymbol{x})^2=\psi(\boldsymbol{x})^{4-2p},
\end{equation*}
which implies that $\psi(\boldsymbol{x}
)^{2p}=1$ for the case $p\neq3$.\\
Now suppose $p=3$. In this case, we cannot use the previous argument simply because $3x=0$. What we can use is the fact that $2^{-1}=2$ and \eqref{eq:quadraticrelation}. Employing these facts, one can show
\begin{equation*}
\psi(\boldsymbol{x})^2=\psi(2\boldsymbol{x})^4=\psi(4\boldsymbol{x})^8=\psi(\boldsymbol{x})^8.
\end{equation*}
Therefore, $\psi(\boldsymbol{x})^6=1$. It completes the proof of step 1.

\emph{Step 2:} $\psi(2\boldsymbol{x})=\pm\psi(\boldsymbol{x})^2$, where the choice of $\pm$ is independent of $\boldsymbol{x}$.

From \eqref{eq:quadraticrelation} it is immediate that $\psi(2\boldsymbol{x})\in\{\pm\psi(\boldsymbol{x})^2\}$. We argue by contradiction, suppose there are two elements $\boldsymbol{x}, \boldsymbol{y}$ such that $\psi(2\boldsymbol{x})=\psi(\boldsymbol{x})^2$ and $\psi(2\boldsymbol{y)}=-\psi(\boldsymbol{y})^2$. In such a case, using the conclusion of step 1, we are led to the conclusion
\begin{equation*}
    -1=(-1)^p=\left(-\psi(\boldsymbol{x})^2\psi(\boldsymbol{y})^2\right)^p=\psi(2\boldsymbol{x})^p\psi(2\boldsymbol{y}
    )^p=\psi(\boldsymbol{x}+\boldsymbol{y})^{2p}=1,
\end{equation*}
which is absurd. Thus, the choice of the sign in the identity is uniform.

Using step 2, we can assume that $\psi(2\boldsymbol{x})=\psi(\boldsymbol{x})^2$; otherwise, we work with $-\psi$, which is also a solution to the equation \eqref{eq:functionalequationonplane} with \eqref{eq:normalizationcondition}.
Define an extension $\Psi$ of the function $\psi$ given by 
\begin{equation*}
    \Psi(\boldsymbol{x})\coloneqq\left\{\begin{aligned}
&\psi(\boldsymbol{x}),& &\boldsymbol{x}\in \mathbb{F}_q^2\setminus\{\boldsymbol{0}\};\\
&1,& &\boldsymbol{x}=\boldsymbol{0}.
\end{aligned}\right.
\end{equation*}

From this definition, it is immediate that $\Psi(\boldsymbol{x})\Psi(\boldsymbol{y})=\Psi(\boldsymbol{z})\Psi(\boldsymbol{w})$ holds for all $\boldsymbol{x}+\boldsymbol{y}=\boldsymbol{z}+\boldsymbol{w}$. In particular, $\boldsymbol{x}+\boldsymbol{y}=(\boldsymbol{x}+\boldsymbol{y})+\boldsymbol{0}$, implies 
\begin{equation*}
    \Psi(\boldsymbol{x})\Psi(\boldsymbol{y})=\Psi(\boldsymbol{x}+\boldsymbol{y}),
\end{equation*}
for all $\boldsymbol{x}, \boldsymbol{y}$. Thus $\Psi$ is a character on the additive group $(\mathbb{F}_q^2, +)$. Now it is clear that there exists $(a_1, a_2)\in \mathbb{F}_q^2$ such that 
\begin{equation*}
    \Psi(\boldsymbol{x})=\exp{\left(\frac{2 \pi i}{p}\operatorname{Tr}_n(a_1x_1+a_2x_2)\right)},
\end{equation*}
for all $\boldsymbol{x}=(x_1, x_2)\in \mathbb{F}_q^2$. In particular, $\psi(\boldsymbol{x})$ has the same representation for all $\boldsymbol{x}\neq \boldsymbol{0}$. Since we have done several reductions on the scalar factor, the general solution to the functional equation \eqref{eq:functionalequation} is given by \eqref{eq:solutionontheplane}.
\end{proof}

Now, using this result, we can classify all the extremizers.  We know that they are constant in absolute value; therefore, we can assume $f(\boldsymbol{x})=\lambda \: \varphi(\boldsymbol{x})$, for some $\lambda\in \mathbb{C}^\times$, and here $\varphi$ is the phase function of $f$. We first observe that any map $ f: \Gamma^3 \to \mathbb{C}$ which acts as follows
\begin{equation*}
     (\eta_1,\eta_2,\eta_3,\eta_4) \mapsto \lambda \cdot \exp\left(\frac{2\pi i}{p} \operatorname{Tr}_{n}(a_1 \eta_1 + a_2 \eta_2 + a_3 \eta_3 + a_4 \eta_4)\right) 
\end{equation*}
for given $ a_1, a_2, a_3, a_4 \in \mathbb{F}_q $ and $ \lambda \in \mathbb{C}^\times $ is an extremizer for the inequality \eqref{eq:combinatorialestimate}. Indeed, 
\begin{equation*}
\begin{split}
    \sum_{\boldsymbol{\xi}\in \Fq^4}\bigg\lvert\sum_{\substack{\boldsymbol{\eta}_1, \boldsymbol{\eta}_2\in \Gamma^3\\ \boldsymbol{\eta}_1+\boldsymbol{\eta}_2=\boldsymbol{\xi}}} f(\boldsymbol{\eta}_1)f(\boldsymbol{\eta}_2) \bigg\rvert^2&=\lvert \lambda\rvert^4 \sum_{\boldsymbol{\xi}\in \Fq^4}\bigg\lvert\sum_{\substack{\boldsymbol{\eta}_1, \boldsymbol{\eta}_2\in \Gamma^3\\ \boldsymbol{\eta}_1+\boldsymbol{\eta}_2=\boldsymbol{\xi}}} \exp \left(\frac{2 \pi i}{p}\operatorname{Tr}_n\bigg((\boldsymbol{\eta}_1+\boldsymbol{\eta}_2)\cdot \boldsymbol{a}\bigg)\right) \bigg\rvert^2\\
    &=\lvert \lambda\rvert^4 \sum_{\boldsymbol{\xi}\in \Fq^4} \left\lvert \Sigma_{\boldsymbol{\xi}} \right\rvert^2= \mathbf{C}^\ast_{\Gamma^3}(2\to4)  \bigg(\sum_{\boldsymbol{\xi}\in \Fq^4} \lvert f(\boldsymbol{\xi})\rvert^2 \bigg)^2.
\end{split}
\end{equation*}

Now, we prove that these are the only extremizers with $|f|=\lvert \lambda\rvert$. Since the phase function $\varphi$ has modulus 1 by \eqref{eq:functionalequation} and the previous proposition, we know that $\varphi|_{A}=c \:\chi$, where $c\in \mathbb{S}^1$ and $\chi$ is a character on the complete plane $A\cup \{\boldsymbol{0}\}$, for any punctured plane $A\subset \Gamma^{3}$. Up to a constant factor, we may assume $c=1$. Hence, we have that $\varphi$ acts as a character in any punctured plane contained in the cone. In particular, for $A_1\coloneqq\left \{(0,\eta_2,0,\eta_4): (\eta_2,\eta_4)\in \mathbb{F}_q^2\setminus\{\boldsymbol{0}\}\right\}$ and $A_2\coloneqq\left\{(\eta_1,0,\eta_3,0):(\eta_1,\eta_3)\in \mathbb{F}_q^2\setminus\{\boldsymbol{0}\}\right\}$, we have: $\varphi(0,\eta_2,0,\eta_4)=\exp\left(\frac{2\pi i}{p}\operatorname{Tr}_{n}(a_2\eta_2+a_4\eta_4)\right)$ and $\varphi(\eta_1,0,\eta_3,0)=\exp\left(\frac{2\pi i}{p}\operatorname{Tr}_{n}(a_1\eta_1+a_3\eta_3)\right)$, for some $a_1,a_2,a_3,a_4\in \mathbb{F}_q$. Consider the function $\psi:\Gamma^3\to \mathbb{S}^1$ given by
\begin{equation*}
    \psi(\eta_1, \eta_2, \eta_3, \eta_4)=\frac{1}{\varphi(\eta_1, \eta_2, \eta_3, \eta_4)}\exp\left(\frac{2\pi i}{p}\operatorname{Tr}_{n}(a_1\eta_1+a_2\eta_2+a_3\eta_3+a_4\eta_4)\right).
\end{equation*} 
Note that also $\psi$ must satisfy \eqref{eq:functionalequation}. Observe that $\psi(0,\eta_2,0,\eta_4)=1$ and $\psi(\eta_1,0,\eta_3,0)=1$ for any $(\eta_1,\eta_3), (\eta_2,\eta_4) \in \mathbb{F}_q^2\setminus\{\boldsymbol{0}\}$. Now, take any $(\eta_1,\eta_2,\eta_3,\eta_4)\in \Gamma^3\setminus \left(A_1\cup A_2\right)$ and take a nonzero $(\rho_2,\rho_4)\neq (-\eta_2, -\eta_4)$; then we have that 
\begin{align*}
\psi(\eta_1,\eta_2,\eta_3,\eta_4)=&\psi(\eta_1,\eta_2,\eta_3,\eta_4)\psi(0,\rho_2,0,\rho_4)\\
&=\psi(\eta_1,0,\eta_3,0)\psi(0,\eta_2+\rho_2,0,\eta_4+\rho_4)=1.
\end{align*}
From this, we conclude that $\varphi(\boldsymbol{\eta})$ is exactly given by $\exp\left(\frac{2\pi i}{p}\operatorname{Tr}_{n}(a_1\eta_1+a_2\eta_2+a_3\eta_3+a_4\eta_4)\right)$.

% \textbf{Question:} Let us define \begin{align*}    \mathcal{A}\coloneqq
% \left\{\varphi:\Gamma^3\to\mathbb{C}^{*};\varphi((x_1,x_2,x_3,x_4))=\lambda\cdot\exp\left(\frac{2\pi i}{p}
% \operatorname{Tr}_n(a_1x_1+a_2x_2+a_3x_3+a_4x_4)\right); a_1,a_2,a_3,a_4\in \mathbb{F}_q\right\}. 
% \end{align*}
% For any $f:\Gamma^3\to \mathbb{C}^{*}$ we define $$d(f,\mathcal{A})=\inf_{\varphi\in \mathcal{A}}\|f-\varphi\|_2.$$

% Can we find the right constant $R(q)$ such that \begin{align*}
% \mathbf{C}^\ast_{\Gamma^3}(2\to4) \bigg(\sum_{\xi\in \Gamma^3} \lvert f(\xi)\rvert^2\bigg)^2-\sum_{\xi\in \mathbb{F}_q^4}\bigg\lvert \sum_{\substack{\eta_1, \eta_2\in \Gamma^3 \\ \eta_1+\eta_2=\xi}} f(\eta_1)f(\eta_2) \bigg\rvert^2\ge R(q)d(f,\mathcal{A})^4
% \end{align*}
% for any such $f$.
\newpage
\bibliographystyle{abbrv}
\bibliography{references.bib}

\end{document}